\newtheorem{theorem}{Theorem}
\newtheorem{proposition}{Proposition}
\newtheorem{lemma}{Lemma}
\newtheorem{remark}{Remark}
\newtheorem{definition}{Definition}
\def\N{\mathbb{N}}
\def\Z{\mathbb{Z}}
\numberwithin{equation}{section}
\newcommand\shorttitle{Bernoulli field on trees under the removal of isolates}
\begin{document}

\hypersetup{linkcolor=black}
\hypersetup{urlcolor=black}
\title{Gibbs properties of the Bernoulli field on inhomogeneous trees under the removal of isolated sites}
\author{Florian Henning\footnotemark[1] \and Christof Külske\footnotemark[2] \and Niklas Schubert\footnotemark[3]}
\date{\today}

\maketitle
\begin{abstract}
    We consider the i.i.d. Bernoulli field $\mu_p$ with occupation density $p \in (0,1)$ on a possibly non-regular countably infinite tree with bounded degrees. For large $p$, we show that the quasilocal Gibbs property, i.e. compatibility with a suitable quasilocal specification, is lost under the deterministic transformation which removes all isolated ones and replaces them by zeros, while a quasilocal specification does exist at small $p$.

Our results provide an example for an independent field in a spatially non-homogeneous setup which loses the quasilocal Gibbs property under a local deterministic transformation.
\end{abstract}
\textbf{Keywords:} Bernoulli field; non-regular tree; transformed measure; Gibbs property;  quasilocality; cutset.\newline
\noindent\textbf{MSC2020 subject classifications:} 82B26 (primary);
60K35 (secondary).
\footnotetext[1]{Eindhoven University of Technology, Netherlands}
\footnotetext[0]{E-Mail: \href{mailto:f.b.henning@tue.nl}{f.b.henning@tue.nl}; ORCID iD: \href{https://orcid.org/0000-0001-5384-2832}{0000-0001-5384-2832}}
\footnotetext[0]{\url{https://www.researchgate.net/profile/Florian-Henning-2}}
\footnotetext[2]{Ruhr-University Bochum, Germany}
\footnotetext[0]{E-Mail: \href{mailto:Christof.Kuelske@ruhr-uni-bochum.de}{Christof.Kuelske@ruhr-uni-bochum.de}; ORCID iD: \href{https://orcid.org/0000-0001-9975-8329}{0000-0001-9975-8329}}
\footnotetext[0]{\url{https://www.ruhr-uni-bochum.de/ffm/Lehrstuehle/Kuelske/kuelske.html}}
\footnotetext[3]{E-Mail: \href{mailto:Niklas.Schubert@ruhr-uni-bochum.de}{Niklas.Schubert@ruhr-uni-bochum.de}; ORCID iD: \href{https://orcid.org/0009-0000-8912-4701}{0009-0000-8912-4701}}
\footnotetext[0]{\url{https://www.researchgate.net/profile/Niklas-Schubert}}
\tableofcontents
\hypersetup{linkcolor=blue}
\section{Introduction}
The Bernoulli field supported on the countable vertex set of a graph
is the basic object in site percolation theory, see \cite{Gr99}, \cite{BoRi06} and \cite{DuTa16}. By definition of the model, 
each vertex carries an independent Bernoulli variable taking the value $1$ with probability $p\in (0,1)$ and $0$ else. 
One is then interested in connectedness properties of the set of occupied sites, 
in particular the existence and properties of infinite connected clusters,  
in their dependence on the parameter $p$ (the occupation density). 
There is special interest in the concrete cases that the graph is a lattice, or 
a tree, both cases being similar in the very basic sense that at 
small $p<p_c$ there is a.s. no infinite cluster, while there is an infinite cluster for large $p>p_c$. The Bernoulli fields on lattices and trees behave also very differently, in another sense, namely 
in the following fundamental aspect: Whereas there is a.s. uniqueness of the infinite cluster on lattices (see \cite{AiKeNe87}), this is no longer true for regular trees, where the uniqueness of the 
infinite cluster is lost at a second transition value for $p$, see e.g. 
Theorem 8.24 in \cite{LyPe16}. This is an example for the richness of statistical mechanics on trees, and underlines that care is needed when we make predictions from lattice behaviour to tree behaviour and vice versa. 

In the present paper we study the Bernoulli field $\mu_p$ on trees 
under the local transformation $T$ which removes from a random configuration 
all isolated sites, but keeps all clusters of size at least two fixed, including the infinite clusters, 
see below (\ref{eq: Transformation T}) and Figure \ref{fig: Transformation T}.  
Our trees are not assumed to be regular, but the case of regular trees is included. 
We may view the transformation $T$ as a cleansing or straightening-out 
of the configuration of occupied sites where the "dust" of isolated sites is removed. 
This transformation therefore has a flavor of a renormalization group (or coarse-graining) 
transformation in statistical physics, 
where short-range degrees of freedom are integrated out, compare \cite{EFS93} and \cite{FoGiMe18}. For different motivations from stochastic geometry, see the introductions of \cite{JaKu23}
and \cite{EnJaKu22}, and also \cite{Br79} and \cite{Ka05} where thinning processes, such as the discrete Mat\'{e}rn process, are discussed. We then ask for properties of the image measure $\mu_p'$ and we are interested in locality 
properties in the precise sense of 
representability of its conditional probabilities in terms of a quasilocal specification, see below \eqref{def: Quasilocality}.

Studies of local transforms of infinite-volume systems in statistical mechanics have been performed in a number of different geometries, types of
systems and transformations, see \cite{EMSS00}, \cite{RoRu14}, \cite{EAEVIGK12}, \cite{EFHR02}, \cite{FeHoMar14}, \cite{HKLR19}, \cite{KuMe21}, \cite{BeKiKu23}, \cite{AcMaEnAeLe22}, \cite{KR06}, \cite{FHM13}, \cite{HoReZu15}, \cite{BHSJ11}. It has been found that strongly interacting systems 
under local maps may become \textit{not} quasilocally \textit{Gibbsian},
 i.e. become non-representable in terms of quasilocal specifications, while weak interactions 
 tend to lead to Gibbsian behavior.

What to expect from the Bernoulli field on a graph under the projection $T$, in the region 
of large $p$? Typical configurations have very few isolated sites, so 
removing these sites does not seem to change the measure very much. Hence one 
may conjecture that the image field $\mu_p'$ is still nicely behaved, with continuous conditional 
probabilities.  On the contrary it was proved recently \cite{JaKu23} that 
on the integer lattice of dimension at least $2$, $\mu_p'$ is non-Gibbs, see also \cite{EnJaKu22}. This  provides an example of a measure on the lattice which is not just weakly coupled 
but even independent and nonetheless becomes non-Gibbsian
under a strictly local transform (with finite range $1$). The companion measure which arises as the projection to isolates (discrete Mat\'{e}rn process) was shown to behave rather differently, namely quasilocal Gibbs for small enough and large enough $p$, see \cite{EnJaKu22}.

 In our present work we ask whether trees and lattices behave the same or we may see differences, as we do when it comes to the uniqueness of the infinite cluster. Our focus is on regimes of large $p$, as these have found to be the singular ones 
on the lattice. Another strong motivation for us is to generalize from a spatially 
homogeneous situation, and study not only regular trees, 
but also allow for possibly inhomogeneous trees as our base spaces.

\subsection*{Main result and techniques}

Our main result is Theorem \ref{thm: NonGibbsGeneral} which states that on trees with bounded degrees, the Bernoulli field under removal of isolates is non-Gibbsian at large enough 
$p$, with a lower bound on the threshold depending on the upper bound of the local degrees. In the opposite small density regime,  it is Gibbsian, i.e it possesses a quasilocal specification. 

Our proof of the interesting part, namely the non-existence of a quasilocal specification,
is based on the two-layer method (see for example \cite{EFS93} and \cite{AcMaEnAeLe22}), which we apply to our setup of inhomogeneous trees. This first step we combine with a detection and an analysis of an internal phase transition on the first-layer and the proof that this phase transition indeed becomes visible on the second-layer, see Section \ref{sec: Two-layer representation} and Figure \ref{fig: Transformation T}. In this way it builds on \cite{JaKu23}, \cite{EnJaKu22}, but develops the new essential tool of type-changing cutsets, see below, to handle the inhomogeneous tree situation.

The first-layer in our case is the independent Bernoulli field on the tree, the second-layer is coupled 
to the first-layer via the deterministic removal transformation, keeping from the first-layer configuration 
the clusters of sizes greater or equal than two on the same tree, see Figure \ref{fig: Transformation T}. 
In order to study properties of the measure on the second-layer, we need 
to study the conditional system on the first-layer, given second-layer configurations and 
their non-local perturbations.

Let us now outline some key points of our proof which are novel and tree-specific and give an idea why they 
do not rely on spatial homogeneity. Note that the proof of an analogous lattice statement of \cite{JaKu23} is based on shifting alternating configurations  
on the lattice. This does not have an analogy on the inhomogeneous tree, as the graph itself lacks any shift-invariance. So our argument has to be new and different.
The particular conditioning on the second-layer we choose to prove 
non-Gibbsianness is the fully empty conditioning, the resulting associated system 
on the first-layer then becomes a model of particles which are conditioned to stay isolated 
(physically speaking: a hardcore gas). For our discontinuity proof we show the
existence of a phase transition for the latter at large density which can be induced by variations of shapes of volumes arbitrarily far away. On the tree, this means more precisely that there are 
two measures whose configurations, up to local fluctuations, typically resemble 
the alternating configurations of \eqref{eq: Ground states}. These can be selected by appropriate 
balls of even or odd radii uniformly in the ball sizes, see Proposition \ref{prop: Wrong occupied site}.
It is important to understand that inhomogeneous degrees do not spoil this selection argument. As the new and essential tree-typical part of the actual proof  
we then analyze energy and entropy of appropriately defined \textit{type-changing cutsets}, see Definition \ref{def: Cutsets} and Figure \ref{fig: type-changing cutset}. We then perform our analysis in terms of the \textit{pushout method}, introduced in Definition \ref{def: Pushout method}, 
which recursively creates all cutsets of a fixed type. The energy of a cutset is defined in terms of a count of the number of net replacements of zeros by ones needed to relate the two alternating configurations in the inside of the cutset, for which we provide a closed expression on the regular tree and suitable bounds on the general tree. Summarizing, our proof shows that it is not percolation in the original Bernoulli field $\mu_p$ which is relevant for the non-quasilocality of the image measure, but rather the hidden phase transition of hardcore particles on the inhomogeneous tree.   

The remainder of the paper is organized as follows. Section \ref{sec: Definitions} provides the basic definitions and the setting of the model. Moreover, we present the precise statement of our main result Theorem \ref{thm: NonGibbsGeneral} on Gibbs properties of the Bernoulli field on trees under the removal of isolated sites. The proof of this statement is split into the Sections \ref{sec: Two-layer representation} and \ref{sec: Counting cutsets and replacements}. In the first part, Section \ref{sec: Two-layer representation}, we relate the (transformed) second-layer model to a suitably constrained first-layer model which is conditioned on isolation of spins. In the second part, Section \ref{sec: Counting cutsets and replacements}, we finally provide the selection argument for the two distinct groundstates of the first-layer constrained model and prove Proposition \ref{prop: Wrong occupied site}. Here, we develop the pushout method for the analysis of energy and entropy of type-changing cutsets.

\section{Model and main results} \label{sec: Definitions}
Let us state some definitions and constructions, which are needed to understand the work of this paper. This section is based on the constructions in the book of Georgii \cite{Ge11}.

\textbf{Countably infinite trees.} We will investigate random variables indexed by the vertices of a countably infinite tree $(V,E)$ with root $\rho$, where $V$ contains the \textit{vertices} and $E \subset \{e \subset V : |e|=2\}$ the (unoriented) \textit{edges}. If two vertices $x,y \in V$ form an edge $\{x,y\}\in E$, they are called \textit{nearest neighbors} and we write $x \sim y$. For any $\Lambda \subset V$ we set $\partial \Lambda:=\{y\in V \setminus \Lambda: y\sim x, x\in \Lambda\}$ and $\Bar{\Lambda}:= \Lambda \cup \partial \Lambda$. As usual the distance $d(x,y)$ between two vertices $x,y\in V$ is defined by the length of the unique shortest path from $x$ to $y$. Let $W \subset V$, then $(W,E_W)$ can be regarded as a subgraph in the sense that $E_W:=\big\{\{x,y\}\in E : x,y \in W\big\}$.

\textbf{Spin configurations.}
A \textit{spin configuration} is a map assigning to each vertex $x\in V$ a value $\omega_x \in \{0,1\}=:\Omega_0$ and we write $\omega=(\omega_x)_{x \in V}$. Thus, the \textit{configuration space} is defined as $\Omega:=\{0,1\}^V=\big\{\omega=(\omega_x)_{x \in V}: \omega_x \in \{0,1\}~\forall x \in V \big\}$, with the underlying product $\sigma$-algebra $\mathscr{F}:=\big(\mathcal{P}(\Omega_0)\big)^{\otimes V}$ generated by the \textit{spin projections} $\sigma_x:\Omega \xrightarrow{} \Omega_0,~~\omega \mapsto \omega_x, \quad x \in V$. If $\sigma_x=0$, we say the vertex $x$ is \textit{unoccupied} and if $\sigma_x=1$, it is \textit{occupied}. Let $p \in (0,1)$ and $\mu_p:=\text{Ber}(p)^{\otimes V}$ denote the Bernoulli-$p$ product measure on $(\Omega,\mathcal{F})$. Then the process $(\sigma_x)_{x \in V}$ is called the \textit{Bernoulli-$p$ field} on $(V,E)$.

Let us introduce some further notations. First of all, $\Omega_\Lambda:=\{0,1\}^\Lambda$ is the restriction of the configuration space on the subset $\Lambda \subset V$. Given $\Lambda \subset V$, the map $\sigma_\Lambda: \Omega \xrightarrow{} \Omega_\Lambda$ defined by $\omega \mapsto \omega_\Lambda:=(\omega_x)_{x \in \Lambda}$ denotes the projection onto the coordinates in $\Lambda$. Let $\Lambda \subset \Delta \subset V$, $\omega\in \Omega_\Lambda$ and $\eta \in \Omega_{\Delta \setminus \Lambda}$, then the \textit{concatenation} $\omega \eta \in \Omega_\Delta$ is defined by $\sigma_\Lambda(\omega \eta)=\omega$ and $\sigma_{\Delta \setminus \Lambda}(\omega \eta)=\eta$. We will consider events depending only on spins in a certain subset. Therefore, it is useful to define for $\Delta \subset V$ $\mathscr{F}_\Delta:=\sigma\big(\sigma_x,~ x\in \Delta\big)$,
the $\sigma$-algebra on $\Omega$ generated by all events occurring in $\Delta$.

\textbf{Quasilocal specifications.} We are interested in analyzing constraints for the Bernoulli field on the tree with bounded degrees. In order to describe the behaviour of these constraints on the model, we will need the notion of \textit{specifications}. These are families $\gamma=(\gamma_\Lambda)_{\Lambda \Subset V}$ of proper probability kernels each from $(\Omega,\mathscr{F}_{\Lambda^c})$ to $(\Omega, \mathscr{F})$ satisfying a consistency relation. A kernel $\gamma_\Lambda$ for $\Lambda \Subset V$ is said to be \textit{proper} if $\gamma_\Lambda(A|\omega)=\mathds{1}_A (\omega)$ for all $A \in \mathscr{F}_{\Lambda^c}$. Two kernels $\gamma_\Lambda$ and $\gamma_\Delta$ with $\Lambda \subset \Delta \Subset V$ should be compatible in the sense that the following \textit{consistency relation} $(\gamma_\Delta \gamma_\Lambda)(A|\omega)=\gamma_\Delta (A|\omega)$ holds for all $A \in \mathscr{F}$ and $\omega \in \Omega$.

A \textit{local} function is a $\mathscr{F}_\Lambda$-measurable function $f: \Omega \rightarrow \mathbb{R}$ for a $\Lambda \Subset V$. 
    Then, a specification $\gamma=(\gamma_\Lambda)_{\Lambda \Subset V}$ is called \textit{quasilocal} if for each $\Lambda \Subset V$ and every local function $f: \Omega \rightarrow \mathbb{R}$ the following holds
\begin{equation}\label{def: Quasilocality}
    \lim_{n \rightarrow \infty} \sup_{\substack{\zeta, \eta \in \Omega\\ \zeta_{\Lambda_n}=\eta_{\Lambda_n}}} \bigg|\int f(\omega)\gamma_\Lambda(d\omega|\zeta)-\int f(\Tilde{\omega})\gamma_\Lambda(d\Tilde{\omega}|\eta)\bigg|=0,
\end{equation}
where $\big(\Lambda_n\big)_{n \in \mathbb{N}}$ is a \textit{cofinal} sequence, i.e. $\Lambda_n \subset \Lambda_m \Subset V$ for all $n \leq m$ and $\bigcup_{n=1}^\infty \Lambda_n=V$.

\textbf{Gibbs measures and the quasilocal Gibbs property.} 
Let $\gamma=(\gamma_\Lambda)_{\Lambda \Subset V}$ be a specification and $\mu \in \mathscr{M}_1(\Omega, \mathscr{F})$ a probability measure on the infinite volume. We call $\mu$ a \textit{Gibbs measure} for the specification $\gamma$ if it satisfies the \textit{DLR-equation}
\begin{equation*}
    \mu(A| \mathscr{F}_{\Lambda^c})=\gamma_\Lambda(A|\cdot) ~~~~~~~ \mu-\text{almost surely}
\end{equation*}
for all $\Lambda \Subset V$ and $A \in \mathscr{F}$. The set of all Gibbs measures for $\gamma$ is denoted by $\mathscr{G}(\gamma)$. $\mu$ is called \textit{quasilocally Gibbs} if there is a quasilocal specification $\gamma$ such that $\mu \in \mathscr{G}(\gamma)$.

\textbf{Projection to non-isolation.} We are interested whether this property is preserved under the projection to the non-isolated spins. This is the deterministic map $T: \Omega \rightarrow \Omega$ removing the isolated spins of a configuration and is visualized in Figure \ref{fig: Transformation T}. Note that all images in this paper are drawn for regular trees, while our analysis is more general. In more detail, for a configuration $\omega \in \Omega$, the map $T$ is given in a vertex $x \in V$ as
    \begin{equation}\label{eq: Transformation T}
(T\omega)_x:= \omega'_x:=\omega_x \bigg( 1- \prod_{y \in \partial x}(1- \omega_y)\bigg).
\end{equation}

\begin{figure}[t]
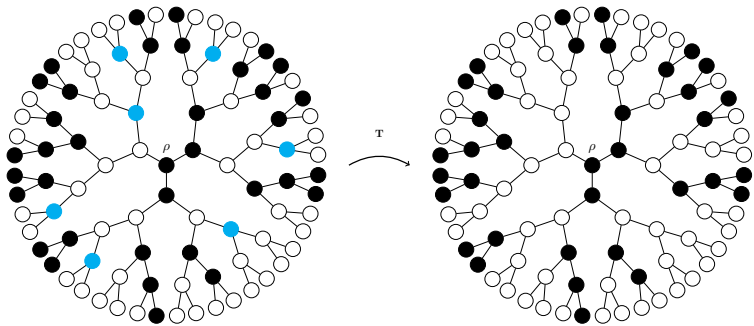

\centering
\scalebox{0.4}{

}
\small \caption{An example of a spin configuration on the binary tree and the application of the map T. Every coloured dot is an occupied site and every uncoloured dot is unoccupied. The blue coloured dots mark the isolated occupied sites.}
\label{fig: Transformation T}
\end{figure}

The main result of this paper is the following Theorem \ref{thm: NonGibbsGeneral}, which states loss of the Gibbs property of the Bernoulli field $\mu_p$ under the transformation $T$ at sufficiently large $p$. Let $\Omega':=T(\Omega) \subset \Omega$ be the image of $T$ and for any $\Lambda \subset V$ let $\mathscr{F}'_\Lambda:= \mathscr{F}_\Lambda \cap \Omega'$.
Consider the \textit{second layer measure} $\mu_p'$ on $(\Omega',  \mathscr{F}'_V)$ defined by 
\begin{equation}
\mu_p':= \mu_p \circ T^{-1}.
\end{equation}

\begin{theorem}\label{thm: NonGibbsGeneral}
	Consider a countably infinite tree $(V,E)$ which is bounded in the sense that there is a number $2 \leq d_{\text{max}}<\infty$ such that each vertex has at least 3 and at most $d_{\text{max}}+1$ nearest neighbours.
	Then, there exist $0<p_1(d_{\text{max}})<p_2(d_{\text{max}})<1$ such that for the independent Bernoulli field $\mu_p$ the following holds true:
	\begin{enumerate}[a)]
		\item
		For $p \in (0,p_1(d_{\text{max}}))$, the transformed measure $\mu'_p$ is quasilocally Gibbs.
		\item For $p \in (p_2(d_{\text{max}}),1)$, the measure $\mu'_p$ is not quasilocally Gibbs.
	\end{enumerate}
\end{theorem}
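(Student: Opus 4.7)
The plan divides naturally along the two cases. Part (a) is the easier direction and I would obtain it by a direct computation of a specification for $\mu_p'$. Given a second-layer configuration $\omega'$, a lift $\omega$ of $\omega'$ under $T$ is constrained as follows: on every $x$ with $\omega'_x = 1$ one has $\omega_x = 1$, while on every $x$ with $\omega'_x = 0$ one has the alternative that $\omega_x = 0$, or $\omega_x = 1$ together with $\omega_y = 0$ for all $y \sim x$. The conditional distribution of the first layer given the second is therefore a Bernoulli-type measure with hardcore constraints driven by the activity $\lambda = p/(1-p)$, which is small for small $p$. A convergent expansion in $\lambda$, or equivalently a Dobrushin uniqueness bound exploiting the short range of the constraint, gives a quasilocal specification for $\mu_p'$ whose kernel depends on distant second-layer data only through exponentially small corrections.

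The interesting case is part (b). My plan is to apply the two-layer method around the bad second-layer configuration $\omega' \equiv 0$. Conditioning on $\omega' \equiv 0$ on a large ball $B_n(\rho)$, the first-layer restricted to $B_n(\rho)$ becomes a hardcore gas with activity $\lambda = p/(1-p)$, which is large when $p$ is close to $1$. On any tree with minimum degree at least three the hardcore gas then exhibits a phase transition, with two extremal measures close to the alternating ground states $\omega^{\mathrm{even}}$ and $\omega^{\mathrm{odd}}$ of \eqref{eq: Ground states}. I would then, for each $n$, construct two second-layer perturbations $\eta^{\mathrm{even}}, \eta^{\mathrm{odd}}$ of $\omega' \equiv 0$ supported outside $B_n(\rho)$ whose translation into first-layer boundary conditions selects opposite ground states, and show via Proposition \ref{prop: Wrong occupied site} that the resulting single-site probabilities at $\rho$ stay bounded away from one another uniformly in $n$. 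This gap forces a discontinuity of any candidate specification at $\omega' \equiv 0$ and yields non-quasilocality.

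To prove Proposition \ref{prop: Wrong occupied site}, I would carry out a Peierls-type energy-entropy analysis adapted to the non-homogeneous tree. A first-layer configuration on $B_n(\rho)$ that takes the wrong alternating value at $\rho$ under a right-type boundary condition must contain a type-changing cutset $C$ in the sense of Definition \ref{def: Cutsets}, separating $\rho$ from the boundary. I would assign to $C$ an energy $E(C)$ equal to the number of net $0 \to 1$ replacements required inside $C$ to pass from one ground state to the other, a quantity growing linearly in $|C|$. The enumeration of cutsets of a given size and shape would be handled by the pushout method of Definition \ref{def: Pushout method}, which parameterises cutsets by successive local extensions from a seed vertex and thereby yields a combinatorial bound controlled purely by $d_{\mathrm{max}}$. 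Summing $\lambda^{-E(C)}$ against the pushout entropy then yields a convergent Peierls series for $\lambda$ large, uniformly in $n$.

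The main obstacle is this last analysis: because the tree carries no shift invariance, the classical route of counting contours modulo translations fails entirely, and the energy $E(C)$ depends in a subtle way on the particular degrees of the vertices inside $C$. One must make the energy-to-entropy ratio beat a uniform threshold across \emph{all} cutset shapes permitted by a bounded-degree tree, not merely regular ones. The pushout method is precisely the tool designed to deliver this uniformity; once the resulting Peierls bound is in hand, the derivation of non-quasilocality via the standard two-layer continuity argument is routine.
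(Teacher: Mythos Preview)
Your proposal is correct and follows essentially the same approach as the paper: part (a) via Dobrushin uniqueness for the constrained first-layer system at small activity, and part (b) via the two-layer method with the all-zero second-layer configuration as the bad point, reducing to a hardcore-gas phase transition detected through a Peierls estimate over type-changing cutsets organized by the pushout method. The paper's implementation differs only in minor specifics you left implicit: the concrete second-layer perturbations are rings of ones placed at even versus odd radius (so that the induced first-layer boundary selects $\omega^0$ or $\omega^1$), and the passage from the first-layer gap of Proposition~\ref{prop: Wrong occupied site} to a genuine second-layer discontinuity goes through an explicit ratio identity (Lemma~\ref{lemma: relation between first and second layer measure}) together with a uniform lower bound on the relevant conditional probabilities (Remark~\ref{rk: boundedfrombelow}), rather than a direct single-site comparison---but this is exactly the ``standard two-layer continuity argument'' you allude to.
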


\begin{remark}
Answering a question of a referee, we conjecture the theorem could hold more generally even in situations with a small enough density of vertices which fail our assumptions on uniform minimal and maximal degree. Clearly \textbf{some} assumptions on the growth of the tree are necessary, as the example of the thinned Bernoulli field on the line $\Z$ shows, which we naturally expect it to be quasilocally Gibbs. 
(Nevertheless, preliminary investigation seems to show that also this one-dimensional 
process displays some remarkable fine properties which are worth an in-depth study.) 
  Now, to find natural conditions on the growth rate of the tree implying non-quasilocality 
and provide proofs for it would require serious investigations. This would put another layer of complexity on top of the ideas of the paper in the present form, which we believe should be the subject of further research.
\end{remark}

\section{Part 1 of the proof: Two-layer representation on trees} \label{sec: Two-layer representation}

The above theorem extends the results obtained in \cite{JaKu23} regarding the situation on the lattice $\Z^d$ with $d \geq 2$ to the geometry of trees with bounded degrees. In particular, it also covers (tree) graphs which are not regular. Note that the proof of an analogous lattice statement of \cite{JaKu23} is based on shifting alternating configurations  
on the lattice, which does not have an analogy on the inhomogeneous tree, so our argument is different. Part a) of Theorem \ref{thm: NonGibbsGeneral} follows from a direct adaption of the argumentation used in \cite{JaKu23} based on Dobrushin-uniqueness theory. In contrast to this, the proof of the more involved part b), which will be given below, takes into account the specific geometry of the tree.

Our goal is to show that there is no quasilocal specification $\gamma'$ for the image measure $\mu'_p$. In order to prove this statement, we will use the two-layer approach. The symbols describing second-layer quantities will carry a prime, to distinguish them from the objects in the first-layer. We will investigate the all-zero configuration $0' \in \Omega'$ and show that this configuration is an essential discontinuity for any quasilocal specification $\gamma'$ which is compatible with the image measure $\mu_p'$. For the purpose of proving this, consider the \textit{constrained first-layer model} $T^{-1}(0')$ supported on isolated configurations. It exhibits two distinct \textit{alternating} ground states $\omega^0$ and $\omega^1$ which are defined as 
\begin{equation}\label{eq: Ground states}
    (\omega^0)_x:=\begin{cases}
    0~~\text{if}~d(\rho,x)~\text{is even,}\\
    1~~\text{if}~d(\rho,x)~\text{is odd}
    \end{cases}
    ~~\text{and}~~~~~
    (\omega^1)_x:=\begin{cases}
    1~~\text{if}~d(\rho,x)~\text{is even,}\\
    0~~\text{if}~d(\rho,x)~\text{is odd.}
    \end{cases}
\end{equation}
They can be transformed into each other by flipping all spins (i.e. changing each unoccupied vertex to an occupied one and vice versa) of the configuration.

Each of these ground states can be evoked by a fitting boundary condition. This is the ball $B_{R}(\rho):=\{x \in V : d(x,\rho) \leq R\}$ around the root with even (for $\omega^0$) or odd (for $\omega^1$) radius $R \in \mathbb{N}_0$ and a fully occupied configuration outside of these balls. In the following, we will abbreviate $B_R(\rho)$ by $B_R$. Let us define a probability measure for the constrained first-layer model for these types of boundary conditions:
\begin{equation*}
\nu_{B_R}(\omega_{B_R}):=\mu_p\big(\sigma_{B_R}=\omega_{B_R}\big|T_{B_R}(\sigma_{B_R}1_{B_R^c})=0'_{B_R}\big).
\end{equation*}
This is the measure conditioned on isolation on $B_R$ and fully occupied boundary condition $1_{B_R^c}$ outside of $B_R$.

Now we can prove, with the following proposition, that the two alternating configurations lead to a phase transition in the first-layer model constrained on isolation.
\begin{proposition}[Phase transition first-layer model]\label{prop: Wrong occupied site}
For the type-1 balls $B_{2R+1}$, the following inequality holds for all $x\in B_2$:
\begin{equation*}
    \sup_{R \in \mathbb{N}_{\geq 2}}\nu_{B_{2R+1}}(\sigma_x\neq \omega_x^1) \leq \epsilon(p),~\text{with}~\lim_{p \uparrow 1} \epsilon(p)=0.
\end{equation*}
The similar statement holds for the type-0 balls $B_{2R}$.
\end{proposition}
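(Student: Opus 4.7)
My strategy is to prove the proposition by a Peierls-type energy--entropy comparison against the boundary-selected ground state, implemented through the type-changing cutsets of Definition~\ref{def: Cutsets} and enumerated via the pushout method of Definition~\ref{def: Pushout method}. The first step is to reduce $\nu_{B_{2R+1}}$ to a hardcore lattice gas: under the boundary condition $1_{B_{2R+1}^c}$ together with the isolation requirement, every vertex at distance exactly $2R+1$ from $\rho$ has an occupied exterior neighbour and so cannot be an isolated $1$, which forces its spin to $0$. The admissible configurations are therefore the independent sets of $1$'s in $B_{2R+1}$ that vanish on the outermost sphere, carrying weight proportional to $p^{N_1}(1-p)^{N_0}$. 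Since $2R+1$ is odd, the alternating configuration $\omega^1$ (with $1$'s on the even-distance vertices) is compatible with this forced vanishing while $\omega^0$ is not; hence $\omega^1$ is the unique boundary-selected ground state against which all other admissible configurations are compared.

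Next I would set up the contour representation. If $\sigma_x\neq\omega^1_x$ for some $x\in B_2$, then along every path from $x$ to the sphere of radius $2R+1$ the configuration $\sigma$ must somewhere switch from disagreement with $\omega^1$ to agreement with $\omega^1$, because the outermost sphere agrees with $\omega^1$ by the forced vanishing. The innermost such collection of switching sites forms a type-changing cutset $C$ enclosing $x$, inside of which $\sigma$ coincides, up to the boundary adjustment on $C$ itself, with the complementary alternating configuration $\omega^0$. Consequently the event $\{\sigma_x\neq\omega^1_x\}$ is contained in the union over all admissible cutsets $C$ enclosing $x$ of the corresponding outermost-cutset event, whose probability can be bounded by the weight of the associated configurations relative to $\omega^1$.

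The pushout method then supplies matching energy and entropy bounds. Energetically, the recursive construction attaches to each cutset $C$ a count $E(C)$ of net replacements of $0$'s by $1$'s required to turn $\omega^0$ into $\omega^1$ in the interior of $C$. This count admits a closed expression on the regular tree, and on a general tree with degrees between $3$ and $d_{\max}+1$ the same recursion yields a lower bound of the form $E(C)\geq c(d_{\max})\,|C|$ with $c(d_{\max})>0$. Via the hardcore representation, the weight ratio of an admissible configuration with outermost cutset $C$ to the reference $\omega^1$ is bounded by $\left(\frac{1-p}{p}\right)^{E(C)}$. Combinatorially, each step of the pushout adds at most a bounded number of vertices (depending only on $d_{\max}$) to the cutset, so the number of cutsets of size $|C|$ enclosing a prescribed vertex $x\in B_2$ is at most $A(d_{\max})^{|C|}$. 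Choosing $p$ so close to $1$ that $A(d_{\max})\bigl(\tfrac{1-p}{p}\bigr)^{c(d_{\max})}<1$ and summing the resulting geometric series over $|C|$ yields the uniform-in-$R$ estimate $\nu_{B_{2R+1}}(\sigma_x\neq\omega^1_x)\leq\epsilon(p)$ with $\epsilon(p)\to 0$ as $p\uparrow 1$. The type-$0$ case for $B_{2R}$ follows by the symmetric argument with the roles of $\omega^0$ and $\omega^1$ exchanged.

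The main obstacle is the linear lower bound $E(C)\geq c(d_{\max})|C|$ in the inhomogeneous setting. On the regular tree both $E(C)$ and the size $|C|$ admit clean closed formulas whose ratio is easily seen to be bounded below; on a general bounded-degree tree one must argue, inductively along the pushout, that no irregular arrangement of local degrees can drive the per-step replacement count down to zero, and it is exactly here that the minimum degree assumption (at least $3$, excluding long one-dimensional stretches which would fail to contribute energy per unit cutset length) enters the analysis.
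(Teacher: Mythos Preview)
Your proposal is correct and follows essentially the same approach as the paper: a Peierls argument via the type-changing cutsets of Definition~\ref{def: Cutsets}, with the energy (net replacement count) and entropy controlled through the pushout recursion of Definition~\ref{def: Pushout method}. The only cosmetic difference is that the paper parametrizes the final sum by the number $n$ of pushout steps rather than by the cutset size, recording the explicit bounds $N_{\text{repl}}^{\vec L_n}\geq 1+n(d_{\min}-1)$ and $|int(\vec L_n)|\leq 1+n(d_{\max}+1)$ (Lemma~\ref{lemma: Properies of cutset}) and then invoking a connected-subgraph count (Lemma~\ref{lemma: Entropy lemma}) on the interior; your identification of the minimum-degree hypothesis as what keeps the per-step energy gain strictly positive is exactly the point.
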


This proposition states that the spins in $B_2$ keep some of the information from the boundary with a long-range dependence. We will postpone the proof to Subsection \ref{sec: Counting cutsets and replacements} and continue with the proof of Theorem \ref{thm: NonGibbsGeneral}. In order to use the result of Proposition \ref{prop: Wrong occupied site}, we need to relate the first-layer measure to the second-layer conditional probabilities. For this purpose, consider the configuration $\omega^{\prime *}:=1'_{B_1}0'_{B_1^c} \in \Omega'$ (see Figure \ref{fig: Configuration omega*}). Then,

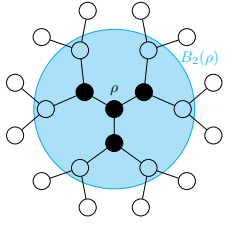
\begin{figure}[t]
\centering
\scalebox{0.45}{
\begin{tikzpicture}[every label/.append style={scale=1.3}]

    \node[shape=circle, draw=cyan, fill=cyan!30, minimum size=4.7cm] (K) at (0,0) {};
    
    \node[shape=circle,draw=black, fill=black, minimum size=0.5cm, label=above:{$\rho$}] (A) at (0,0) {};

    \node[shape=circle, minimum size=0.6cm, label=:{\color{cyan!70}$B_2(\rho)$}] at (2.5,0.8) {};

    \node[shape=circle,draw=black, fill=black, minimum size=0.5cm] (B) at (0,-1) {};
    \node[shape=circle,draw=black, fill=black, minimum size=0.5cm] (C) at (0.866,0.5) {};
    \node[shape=circle,draw=black, fill=black, minimum size=0.5cm] (D) at (-0.866,0.5) {};

    \node[shape=circle,draw=black,minimum size=0.5cm] (BA) at (1,-1.7321) {};
    \node[shape=circle,draw=black,minimum size=0.5cm] (BB) at (-1,-1.7321) {};

    \node[shape=circle,draw=black,minimum size=0.5cm] (BAA) at (2.1213,-2.1214) {};
    \node[shape=circle,draw=black,minimum size=0.5cm] (BAB) at (0.7764,-2.8978) {};

    \node[shape=circle,draw=black,minimum size=0.5cm] (BBA) at (-0.7764,-2.8978) {};
    \node[shape=circle,draw=black,minimum size=0.5cm] (BBB) at (-2.1213,-2.1214) {};

    \node[shape=circle,draw=black, minimum size=0.5cm] (CA) at (2,0) {};
    \node[shape=circle,draw=black, minimum size=0.5cm] (CB) at (1,1.732) {};

    \node[shape=circle,draw=black, minimum size=0.5cm] (CAA) at (2.8978,0.7765) {};
    \node[shape=circle,draw=black, minimum size=0.5cm] (CAB) at (2.8978,-0.7765) {};

    \node[shape=circle,draw=black, minimum size=0.5cm] (CBA) at (0.7765,2.8977) {};
    \node[shape=circle,draw=black, minimum size=0.5cm] (CBB) at (2.1213,2.1212) {};

    \node[shape=circle,draw=black, minimum size=0.5cm] (DA) at (-2,0) {};
    \node[shape=circle,draw=black, minimum size=0.5cm] (DB) at (-1,1.732) {};

    \node[shape=circle,draw=black, minimum size=0.5cm] (DAA) at (-2.8978,-0.7765) {};
    \node[shape=circle,draw=black, minimum size=0.5cm] (DAB) at (-2.8978,0.7765) {};

    \node[shape=circle,draw=black, minimum size=0.5cm] (DBA) at (-2.1213,2.1212) {};
    \node[shape=circle,draw=black, minimum size=0.5cm] (DBB) at (-0.7765,2.8977) {};

    \draw [-] (A) -- (B);
    \draw [-] (A) -- (C);
    \draw [-] (A) -- (D);

    \draw [-] (B) -- (BA);
    \draw [-] (B) -- (BB);

    \draw [-] (BA) -- (BAA);
    \draw [-] (BA) -- (BAB);

    \draw [-] (BB) -- (BBA);
    \draw [-] (BB) -- (BBB);

    \draw [-] (C) -- (CA);
    \draw [-] (C) -- (CB);

    \draw [-] (CA) -- (CAA);
    \draw [-] (CA) -- (CAB);

    \draw [-] (CB) -- (CBA);
    \draw [-] (CB) -- (CBB);

    \draw [-] (D) -- (DA);
    \draw [-] (D) -- (DB);

    \draw [-] (DA) -- (DAA);
    \draw [-] (DA) -- (DAB);

    \draw [-] (DB) -- (DBA);
    \draw [-] (DB) -- (DBB);

    \end{tikzpicture}
    }
    \caption{The configuration $\omega^*\in \Omega$ on the binary tree.}
    \label{fig: Configuration omega*}
    \end{figure}

\begin{lemma}[Relation between the first- and second-layer measure] \label{lemma: relation between first and second layer measure}
    Let $R\in \mathbb{N}$, then:
\begin{equation}\label{eq: relation first and second layer model}
    \frac{\mu_p'(\sigma'_{B_2}=\omega'^*_{B_2}~|~0'_{B_{R+1} \setminus B_2}1'_{B_{R+2} \setminus B_{R+1}})}{\mu_p'(\sigma'_{B_2}=0'_{B_2}~|~0'_{B_{R+1}\setminus B_2} 1'_{B_{R+2} \setminus B_{R+1}})}=\frac{p}{1-p}\nu_{B_{R+1}}(\sigma_{B_2}=\omega^0_{B_2}).
\end{equation}
\end{lemma}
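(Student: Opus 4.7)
The plan is to cancel the shared conditioning event in the LHS ratio, pull both probabilities back to the first layer via $\mu_p'=\mu_p\circ T^{-1}$, and then exploit the product structure of $\mu_p$. Since the transformation \eqref{eq: Transformation T} is a single-vertex map, $(T\omega)|_{B_{R+2}}$ depends only on $\omega|_{B_{R+3}}$, so cancelling the common denominator of the two conditional probabilities reduces the LHS to
\[
\frac{N}{D}\;:=\;\frac{\mu_p(E_N)}{\mu_p(E_D)},\qquad
E_\bullet\;:=\;\{\omega\in\Omega:(T\omega)|_{B_{R+2}}=\zeta_\bullet\},
\]
with $\zeta_N=\omega'^*_{B_2}\,0'_{B_{R+1}\setminus B_2}\,1'_{B_{R+2}\setminus B_{R+1}}$ and $\zeta_D=0'_{B_{R+1}}\,1'_{B_{R+2}\setminus B_{R+1}}$.

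Next I would decompose $B_{R+3}=A\sqcup B\sqcup C$, with $A=B_{R+1}$, $B=B_{R+2}\setminus B_{R+1}$, $C=B_{R+3}\setminus B_{R+2}$. A distance-by-distance inspection of \eqref{eq: Transformation T} shows that both $E_N$ and $E_D$ force $\omega_B=1_B$, force $\omega_x=0$ for every $x\in\partial B_R$ (such an $x$ has an occupied child in $B$, so $\omega_x=1$ would produce $T\omega_x=1$), and require each $x\in B$ to have at least one occupied child in $C$ (its parent in $\partial B_R$ is $0$, so the required occupied neighbour must lie in $C$). By the product structure of $\mu_p$ the factors corresponding to $\omega_B$ and $\omega_C$ are identical in $N$ and $D$ and cancel in the quotient, leaving
\[
\frac{N}{D}\;=\;\frac{\mu_p|_A(E_N^A)}{\mu_p|_A(E_D^A)},
\]
where $E_\bullet^A$ is the residual event on $\omega_A$ once $\omega_B=1_B$ has been imposed.

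The crucial identification is $E_D^A=I_{R+1}$, the isolation event in the definition of $\nu_{B_{R+1}}$: once $\omega_B=1_B$ is fixed it plays the role of the fully-occupied virtual boundary, so $(T\omega)|_A=0$ translates precisely into $\omega_{\partial B_R}=0$ together with ``$\omega_x=0$ or every neighbour in $A$ is unoccupied'' for $x\in B_R$. For $E_N^A$, the condition $(T\omega)|_{B_1}=1$ forces $\omega_{B_1}=1_{B_1}$ (the ``occupied neighbour'' clauses become automatic), and then $(T\omega)|_{B_2\setminus B_1}=0$ forces $\omega_{B_2\setminus B_1}=0$ (every distance-$2$ vertex now has an occupied distance-$1$ parent). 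On $B_R\setminus B_2$, the remaining constraints coincide with those inherited from $I_{R+1}\cap\{\sigma_{B_2}=\omega^0_{B_2}\}$, because both events prescribe $\omega=0$ on the inner boundary $B_2\setminus B_1$ and on the outer boundary $\partial B_R$, and these are the only sites outside $B_R\setminus B_2$ that neighbour $B_R\setminus B_2$. One also verifies that $\omega^0_{B_2}$ is itself isolation-compatible: $\omega^0_\rho=0$ and $\omega^0=0$ at distance $2$ make each distance-$1$ vertex (where $\omega^0=1$) isolated within $A$.

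Consequently $E_N^A$ and $I_{R+1}\cap\{\sigma_{B_2}=\omega^0_{B_2}\}$ prescribe the same value at every vertex of $A$ \emph{except at $\rho$}, where the former has $\omega_\rho=1$ and the latter $\omega_\rho=0$. The associated Bernoulli-$p$ weights therefore differ by the single-site factor $p/(1-p)$, so
\[
\mu_p|_A(E_N^A)\;=\;\frac{p}{1-p}\,\mu_p|_A\!\bigl(I_{R+1}\cap\{\sigma_{B_2}=\omega^0_{B_2}\}\bigr),
\]
and dividing by $\mu_p|_A(I_{R+1})=\mu_p|_A(E_D^A)$ yields \eqref{eq: relation first and second layer model}. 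The main obstacle is the distance-by-distance case analysis that pins down $E_N^A$ and $E_D^A$ exactly and verifies that they agree outside the single site $\rho$; once this identification is in place, the $p/(1-p)$ factor falls out of the Bernoulli weights immediately.
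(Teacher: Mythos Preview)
Your argument is correct. The paper does not supply its own proof of this lemma; it simply cites the lattice analogue in \cite{JaKu23}, so there is no in-paper proof to compare against. Your direct computation---cancelling the common conditioning event, pulling back through $T$, factoring over the annuli $B_{R+1}$, $B_{R+2}\setminus B_{R+1}$, $B_{R+3}\setminus B_{R+2}$ via the Bernoulli product structure, and then identifying the two residual first-layer events as differing only by a single-site flip at $\rho$---is exactly the natural route and is what the cited reference does in the $\mathbb{Z}^d$ setting; the tree geometry introduces no new obstacles here.
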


The proof of this lemma can be found in \cite{JaKu23}. Combining Proposition \ref{prop: Wrong occupied site} and Lemma \ref{lemma: relation between first and second layer measure} then allows us to prove Theorem \ref{thm: NonGibbsGeneral}. To ease readability, we will omit the projections in the notation, e.g. will abbreviate expressions like $\mu_p'(\sigma'_\Lambda=\omega'_\Lambda)$ by $\mu_p'(\omega'_\Lambda)$, unless the projections are necessary for understanding.

Assume, there is a specification $\gamma'$ for the image measure $\mu_p'$. Let us relate the conditional probabilities on the left side of equality (\ref{eq: relation first and second layer model}) to the specification $\gamma'$. From the DLR-equation, we obtain the two statements:
\begin{equation}\label{ineq: a_R, b_R}
\begin{split}
     \mu_p'(\omega'_{B_2}|~0'_{B_{2R+1}\setminus B_2}1'_{B_{2R+2}\setminus B_{2R+1}}) &\geq \inf_{\omega'_{(B_{2R})^c}}\gamma'_{B_2}(\omega'_{B_2}|~0'_{B_{2R} \setminus B_2} \omega'_{(B_{2R})^c})=: a_R(\omega'_{B_2})\\
     \mu_p'(\omega'_{B_2}|~0'_{B_{2R}\setminus B_2}1'_{B_{2R+2}\setminus B_{2R}}) &\leq \sup_{\omega'_{(B_{2R})^c}} \gamma'_{B_2}(\omega'_{B_2}|~0'_{B_{2R} \setminus B_2} \omega'_{(B_{2R})^c})=: b_R(\omega'_{B_2}).
     \end{split}
\end{equation}
If $\gamma'$ were quasilocal, we would have $\vert a_R(\omega'_{B_2})- b_R(\omega'_{B_2}) \vert \stackrel{R \rightarrow \infty}{\rightarrow} 0$. This would imply, together with Remark \ref{rk: boundedfrombelow} below that
\begin{equation}\label{eq: quotconv}
     \frac{a_R(\omega_{B_2}^{\prime *})}{b_R(\omega_{B_2}^{\prime *})}
     \xrightarrow[]{R \uparrow \infty} 1
\quad \text{and} \quad    \frac{a_R(0_{B_2}^{\prime})}{b_R(0_{B_2}^{\prime})}
\xrightarrow[]{R \uparrow \infty} 1.     
\end{equation}
Now, consider the right side of (\ref{eq: relation first and second layer model}). The proposition implies
\begin{equation}\label{ineq: first layer type-0}
\nu_{B_{2R}}(\sigma_{B_2}=\omega^0_{B_2})=1-\nu_{B_{2R}}(\sigma_{B_2} \neq \omega^0_{B_2}) \geq 1-|B_2|\epsilon(p)
\end{equation}
and similarly $\nu_{B_{2R+1}}(\sigma_{B_2}=\omega^0_{B_2}) \leq \nu_{B_{2R+1}}(\sigma_\rho=0)\leq \epsilon(p)$. Combining this with (\ref{ineq: a_R, b_R}) and (\ref{ineq: first layer type-0}) leads to
\begin{equation*}
    \frac{\epsilon(p)}{1-|B_{2}|\epsilon(p)}\geq \frac{\nu_{B_{2R+1}}(\sigma_{B_2}=\omega^0_{B_2})}{\nu_{B_{2R}}(\sigma_{B_2}=\omega^0_{B_2})} \geq \frac{a_R(\omega^{\prime *}_{B_2})}{b_R(\omega^{\prime *}_{B_2})}\frac{a_R(0^{\prime}_{B_2})}{b_R(0^{\prime}_{B_2})}. 
\end{equation*}
Considering $R \rightarrow \infty$ for $p \in (0,1)$ sufficiently large (i.e. $\epsilon(p)$ sufficiently small) implies that \eqref{eq: quotconv} is not satisfied.
Hence, a specification for $\mu_p'$ can not be quasilocal. This finishes the proof of Theorem \ref{thm: NonGibbsGeneral}.

\begin{remark}\label{rk: boundedfrombelow}
Let $\omega'_{B_2}=\omega^{\prime *}_{B_2}$ or $\omega'_{B_2}=0'_{B_2}$. Then, for any $p \in (0,1)$ there is a positive constant $c(p)$ such that for all $R \in \{2,3,\ldots\}$ we have $b_R(\omega'_{B_2}) \geq c(p)$.
\end{remark}
\hypersetup{linkcolor=black}The proof of this remark is presented in the \hyperref[sec: Appendix]{Appendix}. It can be adapted to the respective statement for the lattice $\mathbb{Z}^d$ with lattice dimension $d \geq 2$, which proof was not explicitly given in \cite{JaKu23}. For this, consider the observation window $B_3\subset \mathbb{Z}^d$ instead of $B_2\subset \mathbb{Z}^d$. \hypersetup{linkcolor=blue}

\hypersetup{linkcolor=black}
\section{Part 2 of the proof: Cutsets and pushout method} \label{sec: Counting cutsets and replacements}

\hypersetup{linkcolor=blue}It remains to prove the Proposition \ref{prop: Wrong occupied site}. The proof is based on flipping spins in the interior of certain volumes surrounding the vertices in $B_2$. For this purpose, we will construct cutsets with a type and introduce the pushout method.

 We will confine the proof to the type-1 boundary condition. The proof for the type-0 boundary condition proceeds analogously. We need to upper bound the probability $\nu_{B_{2R+1}}(\omega_\rho=0)$ to see a value in the root which is different from that of the preferred ground state $\omega^1$. Note that similar argumentation will give the same upper bound for the other vertices in $B_2$. Consider an arbitrary path from the origin to the boundary. Regarding the isolation constraint in the first-layer, we can not insert an alternating pattern along the path starting with a zero in the root $\rho$. Therefore, there needs to be at least one pair of unoccupied nearest neighbor vertices in the path from the root to the boundary. Among this set, containing the pairs of zeros, we are interested in such pairs minimizing the distance to the root. This leads to the following definition of a type-changing cutset, which is illustrated in Figure \ref{fig: type-changing cutset}.

\begin{definition}\label{def: Cutsets}
Consider a tree $(V,E)$ with bounded degrees and a root $\rho \in V$. The \underline{children} of a vertex $x$ are the $d_x$ nearest neighbours of $x$ which are farther away from $\rho$ than $x$. Fix the orientation pointing away from the root. In more detail, the directed edges are given by the set $ \Vec{E} := \{ \langle x,y \rangle : \{x,y\}\in E,~d(\rho,y)=d(\rho,x)+1\}$.
  \begin{enumerate}[a)]
\item For $\emptyset \neq \Lambda \subsetneq V$, we call $\vec{L}(\Lambda):=\{ \langle x,y \rangle \in \vec{E} : x \in \Lambda,y \in V \setminus \Lambda \}$ the \underline{cutset} for $\Lambda$.
 \item A subset $\vec{L} \subset \vec{E}$ is called a \underline{type-changing cutset of type $0/1$} iff it is the cutset for a finite subtree $\Lambda \Subset V$ with root $\rho$ such that every leaf of $\Lambda$ has an \textbf{even/odd} distance to the root. In both cases, we call the vertices of the subtree $\Lambda$ the \underline{interior} of the cutset $\vec{L}$, denoted by $int(\vec{L})$.
 For $s=0,1$, we set
 \begin{equation*}
  \mathscr{C}(s,V):=\{\vec{L} \subset \vec{E}~:~\vec{L}~\text{type-changing cutset of type $s$}\}.
 \end{equation*} 
 \item Let us denote the \underline{boundary} of a cutset $\Vec{L} \subset \Vec{E}$ with $\partial\Vec{L}:=\{y\in V: \langle x,y \rangle \in \Vec{L}\}\subset V$ and the \underline{closure} as $cl(\Vec{L}):= int(\Vec{L})\cup \partial \Vec{L} \subset V$. 
 \item  A configuration $\omega \in T^{-1}(0')$ is \underline{adapted} to a type-changing cutset $\vec{L}$ of type $s$ if $\omega_{int(\Vec{L})}=\omega_{int(\Vec{L})}^s$, where $\omega^s$ is the the groundstate of type $s$ (see (\ref{eq: Ground states})) and $\omega_{\partial{\Vec{L}}}=0_{\partial{\Vec{L}}}$. 
 \end{enumerate}
\end{definition}

\begin{remark}
 Note that our type-changing cutsets should not be confused with the \textit{tree-contours} which were used in \cite{GaRuSh12}, \cite{CoKuLN22} whose geometric part consists of subtrees.

In particular, let $\Vec{L}$ be a type-changing cutset and $\omega\in T^{-1}(0')$ a configuration being adapted to $\Vec{L}$. Note that $\Vec{L}$ specifies the configuration in the closure $cl(\vec{L})$, while there may be many adapted configurations which differ on the outside. This distinguishes the notion of being adapted to a cutset from that of being compatible to a contour in the Peierls argument.
\end{remark}

An example of a type-changing cutset of type 0 is illustrated in Figure \ref{fig: type-changing cutset}. Concerning the considerations made before, every different-valued spin inside of the ball $B_{2R+1}$ with respect to the preferred ground state $\omega^1$ has to be surrounded by a type-changing cutset $\Vec{L}\in \mathscr{C}(0,B_{2R+1})$. Therefore, we obtain
\begin{align}
    \nonumber \nu_{B_{2R+1}}(\omega_\rho=0) \leq \sum_{\Vec{L}\in \mathscr{C}(0,B_{2R+1})} \nu_{B_{2R+1}}\big(\omega:~ \omega~\text{is adapted to}~\Vec{L}\big).
\end{align}
Now, let us take a look at each term of the sum. Let $\Vec{L} \in \mathscr{C}(0,B_{2R+1})$ be a type-changing cutset of type 0, then we have
\begin{equation}\label{eq: Decomposition probability Peierls}
    \nu_{B_{2R+1}}\big(\omega:~ \omega~\text{is adapted to}~\Vec{L}\big)=\frac{W(\omega^0_{int(\Vec{L})})(1-p)^{|\partial \Vec{L}|}Z_{B_{2R+1} \setminus cl(\Vec{L})}}{Z_{B_{2R+1}}},
\end{equation}
where $W(\omega_\Lambda):=\prod_{x \in \Lambda} (1-p)^{1-\omega_x}p^{\omega_x}$ are the Bernoulli weights of a subset $\Lambda \Subset V$. Moreover, $Z_\Lambda$ is the partition function over all configurations in $\Lambda \subseteq B_{2R+1}$ being compatible with the boundary condition outside of $B_{2R+1}$ under the isolation constraint. The idea is to flip all the spins of the configuration $\omega$ on $int(\Vec{L})$. For large $p$, this will lead to an energetically more favourable configuration, since we flip more unoccupied vertices to occupied vertices. The net replacements inside of $\Vec{L}$ are given by
\begin{equation}
    N^{\Vec{L}}_{\text{repl}}:=|\{x \in int(\Vec{L}):~\omega^0_x=0\}|-|\{y \in int(\Vec{L}):~\omega^0_y=1\}|.
\end{equation}

\begin{figure}[t]
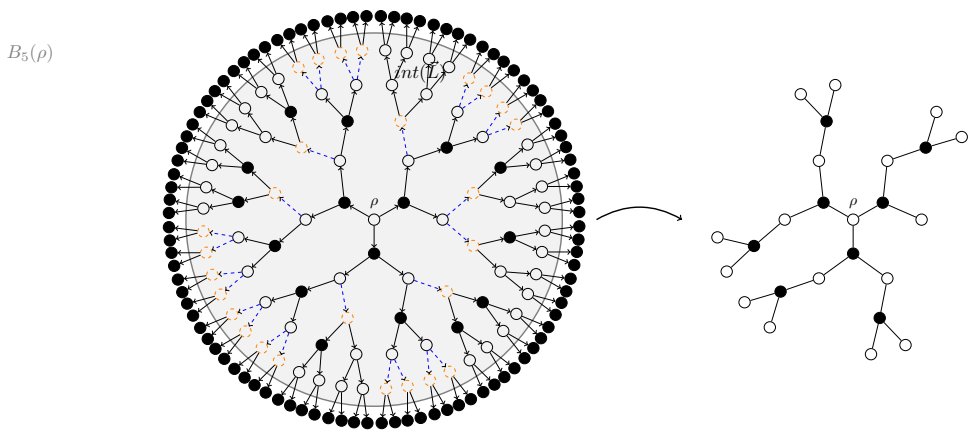

    \centering
\scalebox{0.45}{

}
    \caption{An illustration of the type-1 boundary condition $B_5(\rho)$ on the binary tree together with an isolated configuration inside of the ball. The pictured configuration is unoccupied at the root, hence it is adapted to a type-changing cutset $\vec{L} \in \mathscr{C}(0,B_5(\rho))$ of type $0$, which cuts off the rooted subtree, where the configuration resembles the groundstate $\omega^0$, from the outside $B_5(\rho) \setminus int(\Vec{L})$. The cutset edges are dashed and coloured in blue and the boundary $\partial \Vec{L}$ is dashed and orange. The interior $int(\vec{L})$ of the cutset is pictured on the right side.}
    \label{fig: type-changing cutset}
\end{figure}

Furthermore, the flipping results in an allowed configuration for the model constrained on isolation, because $int(\Vec{L})$ is surrounded by a layer of zeros and we can lower bound the partition function as follows
\begin{equation*}
    Z_{B_{2R+1}} \geq W(\omega^1_{int(\Vec{L})})(1-p)^{|\partial\Vec{L}|}Z_{B_{2R+1} \setminus cl(\Vec{L})}.
\end{equation*} 
Consequently, we obtain that the l.h.s. of \eqref{eq: Decomposition probability Peierls} is bounded from above by $\big(\frac{1-p}{p}\big)^{N^{\Vec{L}}_{\text{repl}}}$. Thus,
\begin{equation}\label{ineq: First Peierls estimation}
    \nu_{B_{2R+1}}(\omega_\rho=0)
\leq \sum_{\Vec{L}\in \mathscr{C}(0,B_{2R+1})} \big(\frac{1-p}{p}\big)^{N^{\Vec{L}}_{\text{repl}}}
\end{equation}
and it remains to determine a specific expression for the replacements of these cutsets. For the purpose of bounding the number of cutsets having a specific number of replacements, we will relate the replacements of each cutset with the number of vertices in the interior. By the assumption of bounded degrees of the tree, the number of possible cutsets with a given number $n$ of vertices in the interior then growths at most exponentially fast with $n$ (see Lemma \ref{lemma: Entropy lemma} below).

\subsection*{Pushout method.} 
 In order to relate the number of net replacements $N^{\Vec{L}}_{\text{repl}}$ in a cutset to the number of vertices $|int(\Vec{L})|$ in the interior, we introduce a pushout method on the tree with bounded degrees. In more detail, let us consider an arbitrary cutset $\Vec{L}$ in $\mathscr{C}(0,V)$ or $\mathscr{C}(1,V)$ and the respective \textit{initial} cutset  $\Vec{L}_0$ with smallest possible interior. Starting from $\Vec{L}_0$, we can obtain $\Vec{L}$ by a unique (up to permutations of the order) sequence of finitely many pushout operations of the cutset edges (see Figure \ref{fig: Construction cutsets}). While pushing out these cutset edges, one can count the number of vertices in the interior $|int(\Vec{L})|$ and the net replacements $N^{\Vec{L}}_{\text{repl}}$.

\begin{definition}\label{def: Pushout method}
Let $\langle x,y \rangle \in \Vec{E}$ be an oriented edge of the tree $(V,E)$ with bounded degrees. Assume that $y$ has $d$ children $z_1, \ldots, z_d$ and further assume that each $z_i$ has $d_{z_i}$ children $v_{i1}, \ldots, v_{id_{z_i}}$. The \underline{pushout operation} applied to the cutset edge $\langle x,y \rangle$ is a map $\pi_{\langle x,y \rangle}: \mathscr{C}(s,V) \rightarrow \mathscr{C}(s,V)$, where $s=0$ if $d(\rho,x)$ is \textbf{even} and $s=1$ if $d(\rho,x)$ is \textbf{odd}. This map is defined as follows, $\pi_{\langle x,y \rangle}(\Vec{L})=\Vec{L}'$ if $\langle x,y \rangle \in \Vec{L}$ and $\pi_{\langle x,y \rangle}(\Vec{L})=\Vec{L}$ otherwise. Here, $\Vec{L}'$ emerges by removing the cutset edge $ \langle x,y \rangle $ from $\Vec{L}$ and replacing it by the $\prod_{i=1}^{d} d_{z_i}$ edges $\langle z_i,v_{ij} \rangle$ (see Figure \ref{fig: Construction cutsets} (b)).
\end{definition}

\begin{figure}
     \begin{subfigure}[b]{0.4\textwidth}
         \scalebox{0.7}{
         \begin{tikzpicture}[every label/.append style={scale=1.1}]

        \scalebox{0.8}{\node[minimum size=0.5cm, label=:{Type 0}] at (1.2,2.4) {};}

        \scalebox{0.8}{\node[minimum size=0.5cm, label=:{Type 1}] at (6.3,2.4) {};}

		\node[shape=circle,draw=black, very thick, minimum size=0.5cm, label=above:{$\rho$}] (center) at (1,0) {};
		\node[shape=circle,dashed, draw=orange, very thick, minimum size=0.5cm] (lacircle) at (-0.0825,0.625) {};
		\node[shape=circle,dashed, draw=orange, very thick, minimum size=0.5cm] (racircle) at (2.0825,0.625) {};
		\node[shape=circle,dashed, draw=orange, very thick, minimum size=0.5cm] (bcircle) at (1,-1.25) {};
		
		\draw[->, very thick, dashed, draw=blue] (center) to (lacircle);
		\draw[->, very thick, dashed, draw=blue] (center) -- (racircle);
		\draw[->, very thick, dashed, draw=blue] (center) -- (bcircle);

  \node[shape=circle,draw=black, fill=black, minimum size=0.5cm, label=above:{$\rho$}] (A) at (5,0) {};
		
		\node[shape=circle,draw=black, very thick,minimum size=0.5cm] (B) at (5,-1) {};
		\node[shape=circle,draw=black,  very thick,minimum size=0.5cm] (C) at (5.866,0.5) {};
		\node[shape=circle,draw=black,  very thick,minimum size=0.5cm] (D) at (4.134,0.5) {};
		
		\node[shape=circle,dashed, draw=orange, very thick,minimum size=0.5cm] (BA) at (6,-1.7321) {};
		\node[shape=circle,dashed, draw=orange,  very thick,minimum size=0.5cm] (BB) at (4,-1.7321) {};
		
		\node[shape=circle,dashed, draw=orange,  very thick,minimum size=0.5cm] (CA) at (7,0) {};
		\node[shape=circle,dashed, draw=orange,  very thick,minimum size=0.5cm] (CB) at (6,1.732) {};
		
		\node[shape=circle,dashed, draw=orange,  very thick,minimum size=0.5cm] (DA) at (3,0) {};
		\node[shape=circle,dashed, draw=orange,  very thick,minimum size=0.5cm] (DB) at (4,1.732) {};

		\draw [->, very thick] (A) -- (B);
		\draw [->, very thick] (A) -- (C);
		\draw [->, very thick] (A) -- (D);

		\draw [->, draw=blue,  dashed, very thick] (B) -- (BA);
		\draw [->, draw=blue, dashed, very thick] (B) -- (BB);
		\draw [->, draw=blue, dashed, very thick] (C) -- (CA);
		\draw [->, draw=blue, dashed, very thick] (C) -- (CB);
		\draw [->, draw=blue, dashed, very thick] (D) -- (DA);
		\draw [->, draw=blue, dashed, very thick] (D) -- (DB);
\end{tikzpicture}
}
         \caption{The initial cutsets}
     \end{subfigure}
     \hfill
     \begin{subfigure}[b]{0.45\textwidth}
     \scalebox{0.7}{ 
\begin{tikzpicture}[every label/.append style={scale=1.1}]
	\node[shape=circle,draw=black, very thick, minimum size=0.5cm, label=above:{$x$}] (A) {};
	\node[shape=circle,dashed, draw=orange, very thick, minimum size=0.5cm, label=above:{$y$}] (B) [right=0.75cm of A] {};
	
	\node[shape=circle,draw=black, very thick, minimum size=0.5cm, label=above:{$x$}] (C) [right=2cm of B] {};
	\node[shape=circle,draw=black, very thick, minimum size=0.5cm, label=above:{$y$}, fill=black] (D) [right=1cm of C] {};
	\node[shape=circle,draw=black, very thick, minimum size=0.5cm, label=above:{$z_1$}] (E) [below right=0.5cm and 0.5cm of D] {};
	\node[shape=circle,draw=black, very thick, minimum size=0.5cm, label=above:{$z_2$}] (F) [above right=0.5cm and 0.5cm of D] {};
	\node[shape=circle,dashed, draw=orange, very thick, minimum size=0.5cm,label=right:{$v_{11}$}] (E_1) [below right=0.10cm and 0.60cm of E] {};
	\node[shape=circle,dashed, draw=orange, very thick, minimum size=0.5cm,label=right:{$v_{12}$}] (E_2) [above right=0.10cm and 0.60cm of E] {};
	\node[shape=circle,dashed, draw=orange, very thick, minimum size=0.5cm,label=right:{$v_{21}$}] (F_1) [below right=0.10cm and 0.60cm of F] {};
	\node[shape=circle,dashed, draw=orange, very thick, minimum size=0.5cm,label=right:{$v_{22}$}] (F_2) [above right=0.10cm and 0.60cm of F] {};
	
	\draw[->, very thick, dashed,draw=blue] (A) -- (B);
 \scalebox{0.85}{\node[minimum size=0.5cm, label=:{Pushout}] at (3,0.4) {};}
	\draw[->, shorten >=5pt, shorten <=5pt, black, very thick] (B) to [bend left] (C);
 \scalebox{0.85}{\node[minimum size=0.5cm, label=:{Merging}] at (3,-1.5) {};}
\draw[->, shorten >=5pt, shorten <=5pt, black, very thick, label=above:{\text{merging}}] (C) to [bend left] (B);
	\draw[->, very thick,draw=black] (C) -- (D);
	\draw[->, very thick] (D) -- (E);
	\draw[->, very thick] (D) -- (F);
	\draw[->, dashed, very thick, draw=blue] (E) -- (E_1);
	\draw[->, dashed, very thick, draw=blue] (E) -- (E_2);
	\draw[->, dashed, very thick, draw=blue] (F) -- (F_1);
	\draw[->, dashed, very thick, draw=blue] (F) -- (F_2);
	\end{tikzpicture}  
 }
         \caption{Pushout and Merging operation}
     \end{subfigure}
        \caption{Algorithmic construction of type-changing cutsets.}
        \label{fig: Construction cutsets}
\end{figure}
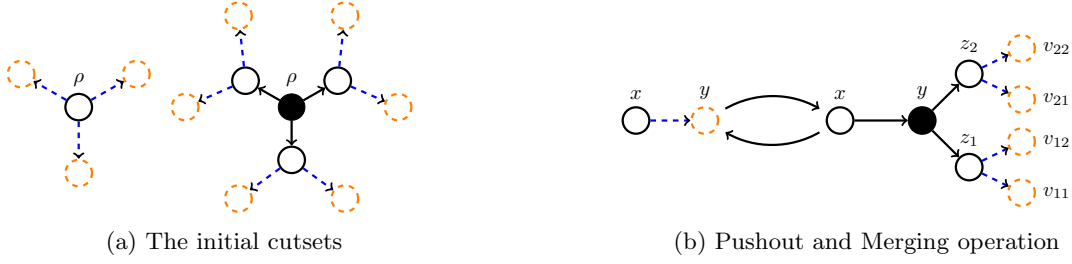

\begin{lemma}\label{lem: PushoutAlgorithm}
Every cutset $\Vec{L} \subset \Vec{E}$ of type $s \in \{0,1\}$ can be obtained from the initial cutset $\Vec{L}_0$ of the respective type by a finite number of pushout operations.
\end{lemma}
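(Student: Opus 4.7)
My plan is to argue by induction on $|int(\vec{L})|$. The base case handles cutsets of minimal interior size: for type $0$, $int(\vec{L}) = \{\rho\}$ forces $\vec{L} = \vec{L}_0$, and for type $1$ the minimal interior is $\{\rho\} \cup \{\text{children of }\rho\}$, again forcing $\vec{L} = \vec{L}_0$; in both cases the empty pushout sequence suffices.

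For the inductive step I will construct a \emph{merging} operation (a right inverse to a pushout) producing a strictly smaller type-$s$ cutset $\vec{L}^-$ with $\pi_{\langle x,y\rangle}(\vec{L}^-) = \vec{L}$ for a suitable oriented edge $\langle x,y\rangle$. Applying the inductive hypothesis to $\vec{L}^-$ and appending this final pushout yields the desired sequence from $\vec{L}_0$ to $\vec{L}$. Concentrating on type $0$ (type $1$ being analogous with parities swapped), the task is to locate an interior vertex $y$ at odd distance from $\rho$ such that all children $z_1, \ldots, z_d$ of $y$ lie in $int(\vec{L})$ while all grandchildren $v_{ij}$ of $y$ lie in $\partial \vec{L}$. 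Given such a $y$, one defines $\vec{L}^-$ by deleting from $\vec{L}$ the edges $\langle z_i, v_{ij}\rangle$ and inserting the single edge $\langle x, y\rangle$, where $x$ denotes the parent of $y$. A direct check confirms that $\vec{L}^-$ is again a type-$0$ cutset with strictly smaller interior and that $\pi_{\langle x, y\rangle}(\vec{L}^-) = \vec{L}$.

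To locate the merging vertex I take a leaf $v$ of $int(\vec{L})$ at maximal distance $2k$ from $\rho$ (so $k \geq 1$ since $\vec{L}\neq \vec{L}_0$) and set $y$ to be the grandparent of $v$, which sits at even distance $2k-2$ and whose parent $x$ lies in $int(\vec{L})$ (or equals $\rho$). Maximality of $d(\rho,v)$ immediately forces all grandchildren of $y$ through its children to lie outside $int(\vec{L})$, since any such vertex would be at distance $2k+1 > 2k$. This supplies half of the merging criterion for free.

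The main obstacle I anticipate is verifying the complementary requirement, namely that \emph{all} children of $y$ already lie in $int(\vec{L})$. This reflects the ``all-or-nothing'' symmetry of the pushout, which simultaneously incorporates every child of the pushed vertex, so that its inverse must do likewise. I expect this to follow from a structural analysis of type-changing cutsets built iteratively by pushouts: at each odd-distance interior vertex the children are added in one block, so the grandparent of any deepest leaf automatically inherits the property that all of its children were added together at the same earlier pushout step. A short case analysis at the root (notably when $y = \rho$ for type $1$, where the merged cutset collapses directly to $\vec{L}_0$) will then close the induction.
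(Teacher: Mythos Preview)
Your overall strategy—build a merging operation inverse to a pushout and induct on the size of the interior—is exactly the paper's approach. Two concrete problems, however, keep the argument from going through.

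\textbf{A parity slip in locating $y$.} You correctly state that for type $0$ the merging vertex $y$ must sit at \emph{odd} distance, but you then set $y$ to be the grandparent of a deepest \emph{interior leaf} $v$; since $d(\rho,v)=2k$ this puts $y$ at even distance $2k-2$, contradicting your own requirement. The paper avoids this by choosing $v\in\partial\vec L$ at maximal distance $n$ (one checks $n$ is odd for type $0$, because the parent of such a $v$ must be a leaf of $int(\vec L)$ and hence at even distance) and then taking $y$ to be the grandparent of that \emph{boundary} vertex, which lands at odd distance $n-2$. In your framing the fix is simply to take $y$ to be the \emph{parent} of the deepest interior leaf.

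\textbf{Circularity at the key step.} The obstacle you single out—that every child of $y$ already lies in $int(\vec L)$—is indeed the crux, but your proposed resolution assumes what you are trying to prove: you appeal to the block-structure enjoyed by cutsets \emph{built iteratively by pushouts} in order to conclude that $\vec L$ has that block-structure, i.e.\ that $\vec L$ is pushout-generated. This is circular. The paper does not reason via pushouts at this point; it derives the claim directly ``by definition of the cutset together with its fixed type and of the number $n$''. What is actually being used is the structural fact that, in a type-$0$ cutset, every interior vertex at odd distance has \emph{all} of its children in the interior (equivalently, every vertex of $\partial\vec L$ lies at odd distance from $\rho$). You need to extract this from Definition~1 itself—via the parity constraint on the leaves of $int(\vec L)$ combined with the maximality of $n$—rather than from an assumed pushout history.
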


\begin{proof}
The result is based on the following algorithm, which is visualized in Figure \ref{fig: Construction cutsets}:
\begin{enumerate}
	\item If $\Vec{L}$ is the initial cutset of type $s$ (see Figure \ref{fig: Construction cutsets} (a)), then terminate. \\Otherwise, let $n:= d(\rho, \partial \Vec{L})=\max_{v\in \partial \Vec{L}} d(\rho,v) \geq 2$. Note that $n$ is odd if $\Vec{L}$ is of type zero and even otherwise. 
	\item Choose any $v \in \partial \Vec{L} \cap \partial B_{n-1}(\rho)$ and let $y$ denote the unique ancestor of $v$ two steps back from $v$ (grandparent). By definition of the cutset together with its fixed type and of the number $n$, all children of $y$ are starting points of cutset edges (see the r.h.s. of Figure \ref{fig: Construction cutsets} (b)).
	\item Perform a merging operation, i.e. replace these cutset edges by the directed edge with end point $y$ (see the l.h.s. of Figure \ref{fig: Construction cutsets} (b)). Substitute $\Vec{L}$ by the so obtained cutset.
	\item Go back to step 1.  
\end{enumerate}	
This algorithm terminates, as finitely many repetitions will decrease the finite number $n$ by steps of size two eventually leading to $\Vec{L}_0$. Substituting the merging operation with the pushout operation and applying these steps in reversed order gives us the statement of the lemma.  
\end{proof}
Performing the pushout operation on an edge of a cutset $\Vec{L}$ alters the relevant quantities, the number $|int(\Vec{L})|$ of vertices in the interior and $N^{\Vec{L}}_{\text{repl}}$ of net replacements of zeros by ones in $int(\Vec{L})$ needed to remove (in terms of flipping the spins) $\Vec{L}$, in the following way:

\begin{remark}\label{rk: Pushout relevant quantities}
If $\vec{L}$ is a type-changing cutset of type $0$ or $1$ and $\langle x,y \rangle\in \vec{L}$ is such that $y$ has $d_y$ children, then for the new cutset $\vec{L}'$ obtained by performing the pushout operation on $\langle x,y \rangle$ the following holds true:
\begin{enumerate}[a)]
	\item  $\vert int(\vec{L}')\vert=\vert int(\vec{L}) \vert+d_y+1$ and   
	\item $N^{\vec{L}'}_{\text{repl}}=N^{\vec{L}}_{\text{repl}}+d_y-1$.
\end{enumerate}
\end{remark}
This leads to the following statement
\begin{lemma}\label{lemma: Properies of cutset}
If $\vec{L}_n$ is a type-changing cutset of type 0 which is obtained from the initial cutset $\vec{L}_0$ by $n$ pushout operations, then we have the following bounds 
\begin{enumerate}[a)]
\item $\vert int(\vec{L}_n) \vert \leq 1+n(d_{\text{max}}+1)$
\item $N_{\text{repl}}^{\vec{L}_n} \geq 1+n(d_{\text{min}}-1)=:r_n$.
\end{enumerate}
For $\vec{L}_n$ a type-changing cutset of type 1, the same statements hold true with $n$ replaced by $n+1$.
\end{lemma}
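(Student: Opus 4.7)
The plan is to prove both inequalities by induction on the number of pushout operations $n$, using the quantitative update formulas in Remark \ref{rk: Pushout relevant quantities} in the inductive step and a direct inspection of the initial cutsets in the base case.

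For the base case of the type-0 statement, note that the minimal subtree $\Lambda$ whose leaves all sit at even distance from $\rho$ is $\Lambda_0 = \{\rho\}$ itself (the root is its own leaf, at distance $0$). Hence $|int(\vec{L}_0)| = 1$, which meets the bound $1 + 0 \cdot (d_{\max}+1) = 1$, and since $\omega^0_\rho = 0$, a single net replacement suffices, matching $r_0 = 1$. For the base case of the type-1 statement, the minimal subtree whose leaves all have odd distance to $\rho$ is $\{\rho\} \cup \partial\{\rho\}$, so $|int(\vec{L}_0^1)| = 1 + d_\rho$. The hypothesis that every vertex has between $3$ and $d_{\max}+1$ neighbours, combined with the observation that the root -- having no parent -- has as many children as it has neighbours, gives $d_\rho \in [d_{\min}+1, d_{\max}+1]$ and therefore the two required inequalities at level $n+1 = 1$: $|int(\vec{L}_0^1)| \leq 2 + d_{\max} = 1 + (0+1)(d_{\max}+1)$ and $N^{\vec{L}_0^1}_{\text{repl}} = d_\rho - 1 \geq d_{\min} = 1 + (0+1)(d_{\min}-1)$, where the replacement count uses $\omega^1_\rho = 1$ and $\omega^1_x = 0$ for the $d_\rho$ children.

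The inductive step is identical for both types and both parts. Suppose $\vec{L}_n$ satisfies the two bounds and $\vec{L}_{n+1} = \pi_{\langle x,y\rangle}(\vec{L}_n)$ for some cutset edge $\langle x,y\rangle \in \vec{L}_n$. Since $y$ is not the root (it is the endpoint of an edge directed away from $\rho$), its number of children $d_y$ satisfies $d_{\min} \leq d_y \leq d_{\max}$. By Remark \ref{rk: Pushout relevant quantities}(a),
\begin{equation*}
|int(\vec{L}_{n+1})| = |int(\vec{L}_n)| + d_y + 1 \leq \bigl(1 + n(d_{\max}+1)\bigr) + (d_{\max}+1) = 1 + (n+1)(d_{\max}+1),
\end{equation*}
and by Remark \ref{rk: Pushout relevant quantities}(b),
\begin{equation*}
N^{\vec{L}_{n+1}}_{\text{repl}} = N^{\vec{L}_n}_{\text{repl}} + d_y - 1 \geq \bigl(1 + n(d_{\min}-1)\bigr) + (d_{\min}-1) = 1 + (n+1)(d_{\min}-1),
\end{equation*}
closing the induction. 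For type-1 cutsets one simply starts the induction at the base computed above and carries the same step, which is why the index shifts from $n$ to $n+1$ in the statement.

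I do not anticipate a real obstacle; the routine is a straightforward induction on top of Remark \ref{rk: Pushout relevant quantities}. The only subtlety worth highlighting is the asymmetric role of the root in the base case of the type-1 bound: the root having no parent makes its child-count strictly larger than the child-count lower bound available for non-root vertices, and this extra unit of degree is precisely what absorbs the $n \mapsto n+1$ shift in the statement.
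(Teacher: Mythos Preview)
Your proposal is correct and takes essentially the same approach as the paper: the paper's own proof consists of the single sentence ``The proof of this lemma relies on an induction on the number of pushout operations needed to construct a cutset starting with the initial cutsets,'' and you carry out precisely that induction, invoking Remark~\ref{rk: Pushout relevant quantities} in the step and checking the two initial cutsets explicitly. Your closing observation about the root's extra child absorbing the $n\mapsto n+1$ shift is a nice articulation of why the type-1 base case works; just note that it implicitly reads $d_{\min}$ as (minimum degree)$-1$, consistent with the paper's convention that the maximum degree is $d_{\max}+1$.
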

The proof of this lemma relies on an induction on the number of pushout operations needed to construct a cutset starting with the initial cutsets (see Lemma \ref{lem: PushoutAlgorithm} and Figure \ref{fig: Construction cutsets}).

Resumming in \eqref{ineq: First Peierls estimation} over the number of pushout operations needed to construct a cutset $\vec{L}$ we arrive at:
\begin{equation}\label{eq: Wrong spin root}
\nu_{B_{2R+1}}(\omega_\rho=0) \leq \sum_{n = 0}^\infty \big |\{\vec{L}_n \in \mathscr{C}(0,V) : \vec{L}_n \text{ is obtained by } n \text{ pushouts }\}\big|\big(\frac{1-p}{p}\big)^{r_n}.  
\end{equation}

To bound the combinatorial weight, we apply the following Lemma, which is a well-known result also used e.g. in \cite{GaRuSh12}.
\begin{lemma}\label{lemma: Entropy lemma}
    Let G be a graph of maximal degree $d_{\text{max}}+1$. Then the number of connected subgraphs $\Gamma \subset G$ with $k$ edges, containing a given vertex is bounded from above by
    \begin{equation*}
        (d_{\text{max}}+1)^{2k}.
    \end{equation*}
\end{lemma}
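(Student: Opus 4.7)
The plan is to construct an injection from the set of connected subgraphs $\Gamma \subset G$ with $k$ edges containing the given vertex $v$ into the set of walks of length $2k$ in $G$ starting at $v$. Once such an injection is in hand, the lemma follows immediately from a trivial bound on the number of walks of length $2k$ starting at $v$: at each of the $2k$ steps, the walk has at most $d_{\text{max}}+1$ choices for its next vertex, and so there are at most $(d_{\text{max}}+1)^{2k}$ such walks. The case $k=0$ is trivial (only $\Gamma=\{v\}$), so I may and will assume $k \geq 1$.

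To define the injection, I would first fix once and for all a linear ordering $\prec_w$ of the at most $d_{\text{max}}+1$ edges incident to each vertex $w$ of $G$. Given $\Gamma$ as above, I would then consider the multigraph $\Gamma^{(2)}$ obtained from $\Gamma$ by replacing every edge with two parallel copies. Since $\Gamma$ is connected, $\Gamma^{(2)}$ is connected, and every vertex in $\Gamma^{(2)}$ has even degree; therefore, by Hierholzer's theorem, $\Gamma^{(2)}$ admits a closed Eulerian circuit based at $v$, of length exactly $2k$. Using the fixed orderings $\prec_w$ to break ties (for instance, at each step select the smallest unused directed copy of an edge at the current vertex), one singles out a specific such circuit $W(\Gamma) = (v = v_0, v_1, \ldots, v_{2k} = v)$, yielding a well-defined map $\Gamma \mapsto W(\Gamma)$ into the set of length-$2k$ walks in $G$ starting at $v$.

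Injectivity of this map is immediate: from $W(\Gamma)$ one reads off the edge set $E(\Gamma) = \{\{v_{i-1}, v_i\} : 1 \leq i \leq 2k\}$, and since $\Gamma$ is connected and contains $v$, its vertex set is recovered as the set of endpoints of these edges, which determines $\Gamma$ uniquely as a subgraph of $G$. The main point requiring attention is the existence and canonicity of the Eulerian circuit of length exactly $2k$, but this is a standard consequence of Hierholzer's theorem once the degree-parity observation for $\Gamma^{(2)}$ is noted; the orderings $\prec_w$ are there only to ensure that one single circuit is selected deterministically per $\Gamma$. With the injection in place, the counting step is a one-liner and the stated bound $(d_{\text{max}}+1)^{2k}$ follows.
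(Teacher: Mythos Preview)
Your proof is correct and is precisely the standard argument the paper invokes by citing Lemma~3.38 of Friedli--Velenik: double every edge, take an Eulerian circuit based at $v$ (length $2k$), and note that the resulting walk in $G$ determines $\Gamma$.

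One minor slip worth flagging: the parenthetical tie-breaking rule you propose (at each step take the smallest unused incident edge at the current vertex) is the naive greedy walk, and it is not guaranteed to produce an Eulerian circuit. For instance, on the doubled three-vertex path with endpoints $v,b$ and midpoint $a$, if at $a$ the copies of the edge to $v$ precede those of the edge to $b$, the walk goes $v\to a\to v$ and is then stuck at $v$ with both $ab$-copies unused. This does not affect the validity of your argument, since any deterministic selection of one Eulerian circuit per $\Gamma$ suffices for the injection (e.g.\ the lexicographically smallest circuit, or the output of Hierholzer's algorithm with ordered splicing); only the specific example in parentheses needs replacing.
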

The proof is an immediate extension of a result for the lattice in \cite{FV17} to the setup of a general graph with bounded degrees, which follows from Lemma 3.38 in \cite{FV17}.

\begin{remark}\label{rk: Relation edges and bonds}
Let $W \Subset V$ be a connected subset of $V$. Then the relation between the edges and the vertices of the subgraph reads $|W|=|E_W|+1$.
\end{remark}

Lemmas \ref{lemma: Properies of cutset} and \ref{lemma: Entropy lemma} and Remark \ref{rk: Relation edges and bonds} give for any $n \in \N_0$
\begin{equation}\label{eq: Combinatorial weights}
 |\{\vec{L}_n \in \mathscr{C}(0,V) : \vec{L}_n \text{ is obtained by } n \text{ pushout operations}\}\big| \leq (d_{\text{max}}+1)^{2n(d_{\text{max}}+1)}.
\end{equation}
Combining \eqref{eq: Wrong spin root} and \eqref{eq: Combinatorial weights} with the lower bound in Lemma \ref{lemma: Properies of cutset} b) yields
\begin{equation}\label{eq: cutset final}
\begin{split}
\nu_{B_{2R+1}}(\omega_\rho=0)\leq \sum_{n = 0}^\infty (d_{\text{max}}+1)^{2n(d_{\text{max}}+1)}\big(\frac{1-p}{p}\big)^{1+n(d_{\text{min}}-1)}.
\end{split}
\end{equation}
Note that the right hand side of (\ref{eq: cutset final}) goes to zero for $p \uparrow 1$, which concludes the statement of Proposition \ref{prop: Wrong occupied site}.

\section*{Appendix}\label{sec: Appendix}
\addcontentsline{toc}{section}{Appendix}

\hypersetup{linkcolor=black}

\textbf{Proof of Remark \ref{rk: boundedfrombelow}.}
Let $\omega'_{B_2}=\omega^{\prime *}_{B_2}$ or $\omega'_{B_2}=0'_{B_2}$. 
The definition of $b_R(\omega^{'}_{B_2})$ and Bayes' formula yield \hypersetup{linkcolor=blue}
\begin{equation}\label{eq: UniformBoundforDenom1}
b_R(\omega^{'}_{B_2}) \geq \mu_p'(\omega^{'}_{B_2}|~0'_{B_{2R}\setminus B_2}1'_{B_{2R+2}\setminus B_{2R}})=\frac{1}{1+\frac{\mu_p'(0'_{B_{2R}\setminus B_2}1'_{B_{2R+2}\setminus B_{2R}} |~ (\omega^{'}_{B_2})^c)}{\mu_p'(0'_{B_{2R}\setminus B_2}1'_{B_{2R+2}\setminus B_{2R}} |~ \omega^{'}_{B_2})}\frac{\mu_p'((\omega^{'}_{B_2})^c)}{\mu_p'(\omega^{'}_{B_2})}},
\end{equation}
where $\mu_p'( (\omega^{'}_{B_2})^c):=\mu_p'(\sigma'_{B_2} \neq \omega^{'}_{B_2})$.
It remains to bound the denominator from above. From the definition of the transformed measure $\mu_p'$ it follows 
\begin{equation}\label{eq: UniformBoundforDenom2}
\begin{split}
&\frac{\mu_p'(0'_{B_{2R}\setminus B_2}1'_{B_{2R+2}\setminus B_{2R}} |~ (\omega^{'}_{B_2})^c)}{\mu_p'(0'_{B_{2R}\setminus B_2}1'_{B_{2R+2}\setminus B_{2R}} |~\omega^{'}_{B_2})}\frac{\mu_p'((\omega^{'}_{B_2})^c)}{\mu_p'(\omega^{'}_{B_2})}= \frac{\mu_p'((\omega^{'}_{B_2})^c0'_{B_{2R}\setminus B_2}1'_{B_{2R+2}\setminus B_{2R}} )}{\mu_p'( \omega^{'}_{B_2}0'_{B_{2R}\setminus B_2}1'_{B_{2R+2}\setminus B_{2R}})} \cr
&=\frac{\sum_{\omega_{B_2}} \sum_{\omega_{B_{2R}\setminus B_2}}f(\omega_{B_{2}}\omega_{B_{2R} \setminus B_2})\mathds{1}_{\{T_{B_2}(\omega_{B_2}\omega_{B_{2R}\setminus B_2}) \neq \omega^{'}_{B_2} \}}\mu_p(\omega_{B_2}\omega_{B_{2R}\setminus B_2}1_{B_{2R+2}\setminus B_{2R}})}{\sum_{\tilde{\omega}_{B_2}} \sum_{\tilde{\omega}_{B_{2R}\setminus B_2}}f(\tilde{\omega}_{B_{2}}\tilde{\omega}_{B_{2R} \setminus B_2})\mathds{1}_{\{T_{B_2}(\tilde{\omega}_{B_2}\tilde{\omega}_{B_{2R}\setminus B_2}) = \omega^{'}_{B_2} \}}\mu_p(\tilde{\omega}_{B_2}\tilde{\omega}_{B_{2R}\setminus B_2}1_{B_{2R+2}\setminus B_{2R}})}.
\end{split}
\end{equation}
Here, $f(\omega_{B_{2}}\omega_{B_{2R} \setminus B_2}):=\mathds{1}_{\{T_{B_{2R} \setminus B_2}(\omega_{B_2}\omega_{B_{2R}\setminus B_2}1_{B_{2R+2}\setminus B_{2R}})=0'_{B_{2R} \setminus B_2}\}}$. Recall that $\mu_p=\text{Ber}(p)^{\otimes V}$ and (\ref{eq: UniformBoundforDenom2}) reads
\begin{equation}\label{eq: UniformBoundforDenom3}
    \frac{\sum_{\omega_{B_2}}W(\omega_{B_2}) \sum_{\omega_{B_{2R}\setminus B_2}}W(\omega_{B_{2R} \setminus B_2})f(\omega_{B_{2}}\omega_{B_{2R} \setminus B_2})\mathds{1}_{\{T_{B_2}(\omega_{B_2}\omega_{B_{2R}\setminus B_2}) \neq \omega^{'}_{B_2} \}}}{\sum_{\tilde{\omega}_{B_2}}W(\tilde{\omega}_{B_2})\sum_{\tilde{\omega}_{B_{2R}\setminus B_2}}W(\tilde{\omega}_{B_{2R} \setminus B_2})f(\tilde{\omega}_{B_2}\tilde{\omega}_{B_{2R} \setminus B_2})\mathds{1}_{\{T_{B_2}(\Tilde{\omega}_{B_2}\Tilde{\omega}_{B_{2R}\setminus B_2}) = \omega^{'}_{B_2} \}}},
\end{equation}
where we recall that $W(\omega_\Lambda)=\prod_{x \in \Lambda} (1-p)^{1-\omega_x}p^{\omega_x}$ are the Bernoulli weights of a subset $\Lambda \Subset V$.
Now let $\hat{\omega}_{B_2}$ be any configuration which satisfies $\hat{\omega}_{B_2 \setminus B_1}=0_{B_2 \setminus B_1}$ and $T_{B_2}(\hat{\omega}_{B_2})=\omega_{B_2}'$. In the case $\omega'_{B_2}=\omega^{\prime *}_{B_2}$, the only possible choice is $\hat{\omega}_{B_2}=\omega^{*}_{B_2}$. In the case $\omega'_{B_2}=0^{'}_{B_2}$, we may simply take $\hat{\omega}_{B_2}=0_{B_2}$. Restricting the denominator of \eqref{eq: UniformBoundforDenom3} to the term with $\tilde{\omega}_{B_2}=\hat{\omega}_{B_2}$ provides the upper bound
\begin{equation}\label{eq: UniformBoundforDenom4}
    \frac{\sum_{\omega_{B_2}}W(\omega_{B_2}) \sum_{\omega_{B_{2R}\setminus B_2}}W(\omega_{B_{2R} \setminus B_2})f(\omega_{B_{2}}\omega_{B_{2R} \setminus B_2})\mathds{1}_{\{T_{B_2}(\omega_{B_2}\omega_{B_{2R}\setminus B_2}) \neq \omega^{'}_{B_2} \}}}{W(\hat{\omega}_{B_2})\sum_{\tilde{\omega}_{B_{2R}\setminus B_2}}W(\tilde{\omega}_{B_{2R} \setminus B_2})f(\hat{\omega}_{B_2}\tilde{\omega}_{B_{2R} \setminus B_2})\mathds{1}_{\{T_{B_2}(\hat{\omega}_{B_2}{\Tilde{\omega}_{B_{2R}\setminus B_2}) = \omega^{'}_{B_2} \}}}},
\end{equation}
where we note that in the case $\omega'_{B_2}=\omega^{\prime *}_{B_2}$ this upper bound becomes an equality. The assumption $\hat{\omega}_{B_2 \setminus B_1}=0_{B_2 \setminus B_1}$ guarantees
that the indicator in the denominator is constantly one and the inequality $f(\omega_{B_{2}}\omega_{B_{2R} \setminus B_2}) \leq f(\hat{\omega}_{B_{2}}\omega_{B_{2R} \setminus B_2})$ holds for all $\omega_{B_{2}}$ and $\omega_{B_{2R} \setminus B_2}$. Moreover, we can upper bound the indicator in the numerator by one. Hence, we obtain the upper bound for \eqref{eq: UniformBoundforDenom4}
\begin{equation}\label{eq: UniformBoundforDenom5}
    \frac{\sum_{\omega_{B_2}}W(\omega_{B_2})}{W(\hat{\omega}_{B_2})}= \frac{1}{W(\hat{\omega}_{B_2})}< \infty,
\end{equation}
which is an $R$-independent upper bound.
Combining \eqref{eq: UniformBoundforDenom1}-\eqref{eq: UniformBoundforDenom5} concludes the proof of Remark \ref{rk: boundedfrombelow}.

\hypersetup{urlcolor=blue}

\section*{Acknowledgements}
The authors thank the anonymous referee for insightful comments and suggestions.

\addcontentsline{toc}{section}{Acknowledgements}

\addcontentsline{toc}{section}{References}


\begin{thebibliography}{10}

\bibitem{AiKeNe87}
M.~Aizenman, H.~Kesten, and C.~M. Newman, \emph{Uniqueness of the infinite
  cluster and continuity of connectivity functions for short and long range
  percolation}, Comm. Math. Phys. \textbf{111} (1987), no.~4, 505--531. DOI: \href{http://dx.doi.org/10.1007/BF01219071}{10.1007/BF01219071}.

\bibitem{Ka05}
Karen Ball, \emph{Poisson thinning by monotone factors}, Electron. Comm.
  Probab. \textbf{10} (2005), 60--69. DOI: \href{http://dx.doi.org/10.1214/ECP.v10-1134}{10.1214/ECP.v10-1134}.

\bibitem{BeKiKu23}
Sebastian Bergmann, Sascha Kissel, and Christof K\"{u}lske, \emph{Dynamical
  {G}ibbs--non-{G}ibbs transitions in {W}idom--{R}owlinson models on trees},
  Ann. Inst. Henri Poincar\'{e} Probab. Stat. \textbf{59} (2023), no.~1,
  325--344. DOI: \href{http://dx.doi.org/10.1214/22-aihp1242}{10.1214/22-aihp1242}.

\bibitem{BHSJ11}
S\'{e}bastien Blach\`ere, Frank den Hollander, and Jeffrey~E. Steif, \emph{A
  crossover for the bad configurations of random walk in random scenery}, Ann.
  Probab. \textbf{39} (2011), no.~5, 2018--2041. DOI: \href{http://dx.doi.org/10.1214/11-AOP664}{10.1214/11-AOP664}.

\bibitem{BoRi06}
B\'{e}la Bollob\'{a}s and Oliver Riordan, \emph{Percolation}, Cambridge
  University Press, New York, 2006. DOI: \href{http://dx.doi.org/10.1017/CBO9781139167383}{10.1017/CBO9781139167383}.

\bibitem{Br79}
P.~Br\'{e}maud, \emph{Optimal thinning of a point process}, SIAM J. Control
  Optim. \textbf{17} (1979), no.~2, 222--230. DOI: \href{http://dx.doi.org/10.1137/0317017}{10.1137/0317017}.

\bibitem{CoKuLN22}
Loren Coquille, Christof K\"{u}lske, and Arnaud Le~Ny, \emph{Extremal
  {I}nhomogeneous {G}ibbs {S}tates for {SOS}-{M}odels and {F}inite-{S}pin
  {M}odels on {T}rees}, J. Stat. Phys. \textbf{190} (2023), no.~4, Paper No.
  71. DOI: \href{http://dx.doi.org/10.1007/s10955-023-03081-y}{10.1007/s10955-023-03081-y}.

\bibitem{AcMaEnAeLe22}
Matteo D'Achille, Aernout C.~D. van Enter, and Arnaud Le~Ny, \emph{Decimations
  for two-dimensional {I}sing and rotator models}, J. Math. Phys. \textbf{63}
  (2022), no.~3, Paper No. 033506, 27. DOI: \href{http://dx.doi.org/10.1063/5.0057174}{10.1063/5.0057174}.

\bibitem{HoReZu15}
F.~den Hollander, F.~Redig, and W.~van Zuijlen, \emph{Gibbs-non-{G}ibbs
  dynamical transitions for mean-field interacting {B}rownian motions},
  Stochastic Process. Appl. \textbf{125} (2015), no.~1, 371--400. DOI: \href{http://dx.doi.org/10.1016/j.spa.2014.09.011}{10.1016/j.spa.2014.09.011}.
  
\bibitem{DuTa16}
Hugo Duminil-Copin and Vincent Tassion, \emph{A new proof of the sharpness of
  the phase transition for {B}ernoulli percolation and the {I}sing model},
  Comm. Math. Phys. \textbf{343} (2016), no.~2, 725--745. DOI: \href{http://dx.doi.org/10.1007/s00220-015-2480-z}{10.1007/s00220-015-2480-z}.

\bibitem{EnJaKu22}
Nils Engler, Benedikt Jahnel, and Christof K\"{u}lske, \emph{Gibbsianness of
  locally thinned random fields}, Markov Process. Related Fields \textbf{28}
  (2022), no.~2, 185--214. Available: \url{https://www.ruhr-uni-bochum.de/imperia/md/content/mathematik/kuelske/engler.pdf}.

\bibitem{FHM13}
R.~Fern\'{a}ndez, F.~den Hollander, and J.~Mart\'{\i}nez, \emph{Variational
  description of {G}ibbs-non-{G}ibbs dynamical transitions for the
  {C}urie-{W}eiss model}, Comm. Math. Phys. \textbf{319} (2013), no.~3,
  703--730. DOI: \href{http://dx.doi.org/10.1007/s00220-012-1646-1}{10.1007/s00220-012-1646-1}.

\bibitem{FeHoMar14}
R.~Fern\'{a}ndez, F.~den Hollander, and J.~Mart\'inez, \emph{Variational
  description of {G}ibbs-non-{G}ibbs dynamical transitions for spin-flip
  systems with a {K}ac-type interaction}, J. Stat. Phys. \textbf{156} (2014),
  no.~2, 203--220. DOI: \href{http://dx.doi.org/10.1007/s10955-014-1004-0}{10.1007/s10955-014-1004-0}.

\bibitem{FoGiMe18}
Samuel Foreman, Joel Giedt, Yannick Meurice, and Judah Unmuth-Yockey,
  \emph{Examples of renormalization group transformations for image sets},
  Phys. Rev. E \textbf{98} (2018), no.~5, 052129, 14. DOI: \href{http://dx.doi.org/10.1103/physreve.98.052129}{10.1103/physreve.98.052129}.

\bibitem{FV17}
S.~Friedli and Y.~Velenik, \emph{Statistical mechanics of lattice systems},
  Cambridge University Press, Cambridge, 2018, A concrete mathematical
  introduction. DOI: \href{http://dx.doi.org/10.1017/9781316882603}{10.1017/9781316882603}.

\bibitem{GaRuSh12}
Daniel Gandolfo, Jean Ruiz, and Senya Shlosman, \emph{A manifold of pure
  {G}ibbs states of the {I}sing model on a {C}ayley tree}, J. Stat. Phys.
  \textbf{148} (2012), no.~6, 999--1005. DOI: \href{http://dx.doi.org/10.1007/s10955-012-0574-y}{10.1007/s10955-012-0574-y}.

\bibitem{Ge11}
Hans-Otto Georgii, \emph{{Gibbs measures and phase transitions}}, second ed.,
  de Gruyter Studies in Mathematics, vol.~9, Walter de Gruyter \& Co., Berlin,
  2011. DOI: \href{http://dx.doi.org/10.1515/9783110250329}{10.1515/9783110250329}.

\bibitem{Gr99}
Geoffrey Grimmett, \emph{Percolation}, second ed., Grundlehren der
  mathematischen Wissenschaften [Fundamental Principles of Mathematical
  Sciences], vol. 321, Springer-Verlag, Berlin, 1999. DOI: \href{http://dx.doi.org/10.1007/978-3-662-03981-6}{10.1007/978-3-662-03981-6}.

\bibitem{HKLR19}
Florian Henning, Christof K{\"u}lske, Arnaud Le~Ny, and Utkir~A. Rozikov,
  \emph{{Gradient Gibbs measures for the SOS-model with countable values on a
  Cayley tree}}, Electron. J. Probab. \textbf{24} (2019). DOI: \href{http://dx.doi.org/10.1214/19-EJP364}{10.1214/19-EJP364}.

\bibitem{JaKu23}
Benedikt Jahnel and Christof K\"ulske, \emph{{Gibbsianness and non-Gibbsianness
  for Bernoulli lattice fields under removal of isolated sites}}, Bernoulli 29(4), 3013–-3032, (2023). DOI: \href{http://dx.doi.org/10.3150/22-bej1572}{10.3150/22-bej1572}.


  

\bibitem{KuMe21}
Christof K\"{u}lske and Daniel Mei\ss ner, \emph{Dynamical {G}ibbs--non-{G}ibbs
  transitions in the {C}urie-{W}eiss {P}otts model in the regime {$\beta <3$}},
  J. Stat. Phys. \textbf{184} (2021), no.~2, Paper No. 15, 35. DOI: \href{http://dx.doi.org/10.1007/s10955-021-02793-3}{10.1007/s10955-021-02793-3}.

\bibitem{KR06}
Christof K\"{u}lske and Frank Redig, \emph{{Loss without recovery of
  {G}ibbsianness during diffusion of continuous spins}}, Probab. Theory Relat.
  Fields \textbf{135} (2006), no.~3, 428--456. DOI: \href{http://dx.doi.org/10.1007/s00440-005-0469-9}{10.1007/s00440-005-0469-9}.

\bibitem{LyPe16}
Russell Lyons and Yuval Peres, \emph{Probability on trees and networks},
  Cambridge Series in Statistical and Probabilistic Mathematics, vol.~42,
  Cambridge University Press, New York, 2016. DOI: \href{http://dx.doi.org/10.1017/9781316672815}{10.1017/9781316672815}.

\bibitem{RoRu14}
S.~Roelly and W.~M. Ruszel, \emph{Propagation of {G}ibbsianness for
  infinite-dimensional diffusions with space-time interaction}, Markov Process.
  Related Fields \textbf{20} (2014), no.~4, 653--674. Available: \url{https://doi.org/10.48550/arXiv.1312.0394}.

\bibitem{Ro13}
Utkir~A. Rozikov, \emph{Gibbs measures on {C}ayley trees}, World Scientific
  Publishing Co. Pte. Ltd., Hackensack, NJ, 2013. DOI: \href{http://dx.doi.org/10.1142/8841}{10.1142/8841}.

\bibitem{EMSS00}
A.~van Enter, C.~Maes, R.~H. Schonmann, and S.~Shlosman, \emph{The {G}riffiths
  singularity random field}, On {D}obrushin's way. {F}rom probability theory to
  statistical physics, Amer. Math. Soc. Transl. Ser. 2, vol. 198, Amer. Math.
  Soc., Providence, RI, 2000, pp.~51--58. DOI: \href{http://dx.doi.org/10.1090/trans2/198/04}{10.1090/trans2/198/04}.

\bibitem{EFHR02}
A.C.D. van Enter, R.~Fern{\'a}ndez, F.~den Hollander, and F.~Redig,
  \emph{{Possible Loss and Recovery of {G}ibbsianness during the Stochastic
  Evolution of {G}ibbs Measures}}, Communications in Mathematical Physics
  \textbf{226} (2002), 101--130. DOI: \href{http://dx.doi.org/10.1007/s002200200605}{10.1007/s002200200605}.

\bibitem{EAEVIGK12}
Aernout C.~D. van Enter, Victor~N. Ermolaev, Giulio Iacobelli, and Christof
  K\"{u}lske, \emph{Gibbs-non-{G}ibbs properties for evolving {I}sing models on
  trees}, Ann. Inst. Henri Poincar\'{e} Probab. Stat. \textbf{48} (2012),
  no.~3, 774--791. DOI: \href{http://dx.doi.org/10.1214/11-AIHP421}{10.1214/11-AIHP421}.

\bibitem{EFS93}
Aernout C.~D. van Enter, Roberto Fern\'{a}ndez, and Alan~D. Sokal,
  \emph{Regularity properties and pathologies of position-space
  renormalization-group transformations: scope and limitations of {G}ibbsian
  theory}, J. Statist. Phys. \textbf{72} (1993), no.~5-6, 879--1167.
  DOI: \href{http://dx.doi.org/10.1007/BF01048183}{10.1007/BF01048183}.

\end{thebibliography}
\end{document}